\DeclareMathOperator{\pr}{\mathbb P}
\DeclareMathOperator{\E}{\mathbb E}
\DeclareMathOperator{\ind}{\mathbb I}
\DeclareMathOperator*{\argmin}{arg\,min}
\newcommand{\Real}{\mathbb R}
\newcommand{\NatInt}{\mathbb N}
\newcommand{\CalF}{\mathcal F}
\newcommand{\CalH}{\mathcal H}
\newcommand{\CalR}{\mathcal R}
\newcommand{\CalL}{\mathcal L}
\newcommand{\CalO}{\mathcal O}
\newcommand{\CalG}{\mathcal G}
\newcommand{\CalS}{\mathcal S}
\newcommand{\CalT}{\mathcal T}
\newcommand{\rmI}{\boldsymbol{\mathrm I}}
\newcommand{\BFx}{\boldsymbol{x}}
\newcommand{\BFX}{\boldsymbol{X}}
\newcommand{\BFP}{\boldsymbol{P}}
\newcommand{\BFb}{\boldsymbol{b}}
\newcommand{\BFW}{\boldsymbol{W}}
\newcommand{\BFU}{\boldsymbol{U}}
\newcommand{\BFv}{\boldsymbol{v}}
\newcommand{\BFM}{\boldsymbol{M}}
\newcommand{|}{\vert}
\newcommand{\BFS}{\boldsymbol{S}}
\theoremstyle{thmstyleone}%
\newtheorem{theorem}{Theorem}
\newtheorem{proposition}{Proposition}
\newtheorem{assumption}{Assumption}
\newtheorem{lemma}{Lemma}
\theoremstyle{thmstyletwo}%
\newtheorem{example}{Example}%
\newtheorem{remark}{Remark}%
\theoremstyle{thmstylethree}%
\newtheorem{definition}{Definition}%
\begin{document}
\title[Improved Convergence Rate of Nested Simulation with LSE on Sieve]{Improved Convergence Rate of Nested Simulation with LSE on Sieve \footnote[1]{ 
Corresponding author: Lu Zou, \url{luzou0330@szpu.edu.cn}\\ This work  is partly supported by National Natural Science Foundation of China (No.72301076, No.12101149) and Science and Technology Commission of Shanghai Municipality (No.23PJ1400800).}
}


 \author[1]{\fnm{Ruo-xue} \sur{Liu}\footnote[2]{Division of Emerging Interdisciplinary Areas, Academy of Interdisciplinary Studies, The Hong Kong University of Science and Technology, Clear Water Bay, Kowloon, 999077, Hong Kong. }}

 \author[2]{\fnm{Liang} \sur{Ding}\footnote[3]{Schoold of Data Science, Fudan University, 220 Handan Rd, Yangpu District, Shanghai, 200437, China. }}

 \author[3]{\fnm{Wen-jia} \sur{Wang}\footnote[4]{\label{foot:hkust}Data Science and Analytics Thrust, Information Hub, The Hong Kong University of Science and Technology (Guangzhou), 1 Duxue Rd, Nansha District, Guangzhou, Guangdong, 511453, China. }}

 \author[4]{\fnm{Lu} \sur{Zou}\footnote[1]{\label{foot:}School of Management, Shenzhen Polytechnic University, 7098 Liuxian Avenue, Nanshan District, Shenzhen, Guangdong, 518055, China.}}




\abstract{ Nested simulation encompasses the estimation of functionals linked to conditional expectations through simulation techniques. In this paper, we treat conditional expectation as a function of the multidimensional conditioning variable and provide asymptotic analyses of general non-parametric Least Squared Estimators on sieve, without imposing specific assumptions on the function's form. Our study explores scenarios in which the convergence rate surpasses that of the standard Monte Carlo method and the one recently proposed based on kernel ridge regression. We use kernel ridge regression with inducing points and neural networks as examples to illustrate our theorems. Numerical experiments are conducted to support our statements. }

\keywords{nested simulation; nonparametric estimation}


\pacs[Mathematics Subject Classification]{62G08 $\cdot$  62G20 $\cdot$ 90-10}
\maketitle

\section{Introduction}\label{sec:intro}

Simulation often aims to estimate a function linked to conditional expectations, which can take the form of either an expectation or a quantile. Typically, there is no analytical expression readily available for either the conditional expectation or the function itself. Therefore, the estimation process involves two simulation layers.  First, in the ``outer'' level, we simulate the random variable being conditioned on. Then, in the ``inner'' level, we simulate another random entity, conditioning on each previously generated sample (referred to as a ``scenario''). This problem category is known as \emph{nested simulation}. In this paper, our primary focus centers on using simulation to estimate the following quantity:
\begin{align}\label{eqgoal}
    \theta = \mathcal{T}(\mathit{E}[Y \vert  X]),
\end{align}
where $X$ is a random variable in $\mathbb{R}^d$, $Y$ is a real-valued random variable, and $\mathcal{T}$ is a functional mapping of a probability distribution to a real number.
Nested simulation is closely related to conditional Monte Carlo, a versatile technique for reducing variance in simulations \citep[Chapter 5]{AsmussenGlynn07}. Any expectation, denoted as $\mathit{E}[Y]$, can be expressed as $\mathit{E}[\mathit{E}[Y \vert X]]$. By the law of total variance, the variance of $\mathit{E}[Y \vert  X]$ is less than or equal to the variance of $Y$. Consequently, $\mathit{E}[\mathit{E}[Y \vert X]]$ serves as an unbiased estimator with lower variance than $Y$. It's especially valuable when $X$ and $Y$ are strongly correlated, and when the conditional expectation can be computed or estimated efficiently. Additionally, conditional Monte Carlo can be used as a smoothing technique for gradient estimation \citep{FuHu97,FuHongHu09}. We highlight the motivation for nested simulation through its application in quantifying input uncertainty within stochastic simulations\citep{barton2014quantifying,xie2014bayesian}.  Consider a complex queue system driven by customer arrivals. The decision-maker needs to estimate the average waiting time of the customers, which can be expressed as $\mathit{E}[Y \vert X]$. In this example, the input $X$ can be represented as the arrival rates of customers at different times of the day. Generally, we only have access to a finite number of samples of customer arrival times, from which the input density of $X$ is estimated. The uncertainty associated with the estimated distribution of the input $X$ frequently significantly influences the accuracy of the estimated performance of the system.

An important research topic in nested simulation is the allocation of simulation resources, specifically the determination of outer-level scenarios and inner-level samples.  In this study, we use equal inner-level samples per outer-level scenario, a standard
treatment in the literature\citep{GordyJuneja10,BroadieDuMoallemi15}. For a given budget $\Gamma$, standard nested simulation has outer-level sample size $n$ growing as $\CalO(\Gamma^{2/3})$ and inner-level sample size $m$ for each out-level sample as $\CalO(\Gamma^{1/3})$. Under this rule of allocating the simulation budget achieves a convergence rate of the root mean squared error (RMSE) at $\CalO(\Gamma^{-1/3})$ \citep{GordyJuneja10,ZhangLiuWang22}, which is slower than the optimal rate $\CalO(\Gamma^{-1/2})$ over all Monte Carlo methods. 

Recent studies \citep{hong2017kernel, wang2024smooth} have highlighted opportunities to improve the optimal convergence rate $\CalO(\Gamma^{-1/3})$. Specifically, \cite{hong2017kernel} demonstrated faster convergence rates in terms of RMSE, while \citep{wang2024smooth} achieved faster convergence rates in terms of mean absolute error (MAE). In particular, if the conditional expectation function $f(\BFx) = \mathit{E}[Y \vert X = \BFx]$ is a \emph{Sobolev} function with respect to $\BFx$, a convergence rate for the MAE of $\CalO(\Gamma^{-1/2})$ can be achieved under certain conditions \citep{wang2024smooth}. This condition allows for the exchange of information among different outer-level scenarios ${\BFx}$ and can be effectively estimated using \emph{kernel ridge regression }(KRR) \citep{Hastie20}. In this study, we extend the settings in \cite{wang2024smooth} to more general function spaces and estimators. Specifically, we unify the traditional nested simulation and the recent KRR method under the so-called least squared estimation (LSE) on sieve. Within this framework, we have the flexibility to incorporate various advanced estimation techniques, such as KRR with inducing points and neural networks, to enhance the overall effectiveness of nested simulation. We then investigate the circumstances under which the best possible convergence rate $\CalO(\Gamma^{-1/2})$ in terms of $l_p$ norm is attainable. Our findings encompass convergence results for both the MAE and the RMSE.

\subsection{Related Literature}

In recent years, the application of nonparametric machine learning methods in nested simulation has attracted considerable interest. Key smoothing techniques include stochastic kriging, kernel smoothing, and likelihood ratio approaches, as detailed in \citep{broadie2015risk,hong2017kernel,hong2009estimating,barton2014quantifying,lin2020fast,liu2010stochastic}. These methods aim to enhance the efficiency of nested estimation by employing nonparametric machine learning to approximate conditional expectations. Parametric approaches such as the linear basis function model remain popular for their use of flexible basis functions, including polynomials, splines, and radial basis functions. Customizing these basis functions to fit specific domains can further improve performance. For instance, \citep{broadie2015risk} utilized this model in portfolio risk management under nested simulation, combining polynomial risk factors with payoff-based basis functions. Similarly, \citep{LinYang20} employed spline regression techniques to estimate insurance portfolio risks. In addition, non-parametric techniques like stochastic kriging (a form of Gaussian process regression) have been effectively employed in nested simulation. For example, \citep{liu2010stochastic} applied stochastic kriging to conditional value-at-risk estimation. Such methods improve sample efficiency in nested simulations by enabling information exchange across scenarios at the outer level. Likewise, the likelihood ratio method achieves comparable benefits by utilizing likelihood ratios to efficiently use all inner-level data for conditional expectation estimation, as demonstrated in \citep{feng2020optimal,liu2019online,zhang2022sample}.

Another area within the field of nested simulation aims at deriving budget allocation policies to minimize the asymptotic MSE. \cite{GordyJuneja10} showed that minimizing the asymptotic MSE through budget allocation yields $m=c\Gamma^{1/3}+o(\Gamma)$, and $b=c^{-1}\Gamma^{2/3}+o(\Gamma)$, where $c$ is an unknown constant. The effective application of nested simulation is challenging because the unknown constant $c$ significantly impacts the allocation scheme, as well as the MSE of the nested simulation estimators. Recently,  \cite{zhang2022bootstrap} introduced a novel sample-driven budget allocation rule that utilizes the bootstrap approach and LSM to estimate the constant $c$ showcasing its asymptotic effectiveness. Building on the quest for efficiency in variance reduction, \cite{liu2024kernel} introduced a kernel quantile estimation for variance reduction, specifically addressing the risk measure. Additionally, \cite{liang2024fast} developed a new estimator that employs the jackknife technique for estimating $ \theta =\mathit{E}[ \mathcal{T}(\mathit{E}[Y \vert  X])]$.  Beyond point estimation, \cite{lan2010confidence}  introduced a novel two-level simulation procedure that constructs confidence intervals (CIs) for the variance of conditional expectation. More recently, \cite{cheng2022constructing} established unified CIs for $\mathcal{T}(\mathit{E}[Y | X])$, achieving optimal convergence rates of $\mathcal{O}(\Gamma^{-1/3})$ for the CIs width in nested simulations involving various functional forms. For further literature on CIs in nested simulation, please see \citep{cheng2021non,ZhuLiuZhou20}.


\subsection{Notation and Organization}
Throughout the paper, we use the following notation. For independent and identically distributed (i.i.d.) outer scenario $\{\BFx_i\}_{i=1}^n$, let $P_n=\frac{1}{n}\sum_{i=1}^n\delta_{\BFx_i}$ be the empirical density of $X$ with $\delta_{\BFx_i}$ a point mass at $\BFx_i$. For any functions $f$, $g$ defined on $\{\BFx_i\}_{i=1}^n$, define
the empirical inner product of $f$ and $g$ as $\langle f, g\rangle_n=\int f(\BFx)g(\BFx)\mathrm{d}P_n$ 
and the empirical $L_2$ norm $\|f\|_n^2=\langle f,f\rangle_n.$ We also define the $L_2$ norm induced by the distribution of $X$, denoted by $P_X$, as $\|f\|_{L_2(X)}=\int f^2\mathrm{d}P_X$. Let $\Omega$ be the support of the distribution of $X$. Then the $L_\infty$ norm of a function $f$ is defined as $\|f\|_{L_\infty}=\sup_{\BFx\in\Omega}|f(\BFx)|$. For any integer $K$, let $[K]$ denote the set $\{1,\cdots,K\}.$

For two positive sequences $a_n$ and $b_n$, $a_n = \CalO_{\pr}(b_n)$ means that for any $\varepsilon>0$, there exists $C>0$ such that
$\pr(a_n >C b_n) <\varepsilon$
for all $n$ large enough.  We also write 
$a_n=\CalO(b_n)$ and $a_n\asymp b_n$
if there exist some constants $C,C^\prime >0$ such that 
$a_n\leqslant C b_n$ and 
$C^\prime \leqslant a_n/b_n \leqslant C $, respectively, for all $n$ large enough.

The remainder of the paper is organized as follows.
In Section~\ref{sec:problem}, we introduce the background of nested simulation and formulate the research question.
In Section~\ref{sec:LSE}, we study the general framework of LSE on sieve.
In Section~\ref{sec:asymptotic}, we analyze its asymptotic properties. 
In Section~\ref{sec:example}, we use KRR with inducing points and neural networks as examples.
In Section~\ref{sec:numerical}, we conduct numerical experiments to verify our theorems. 
In Section~\ref{sec:conclusion}, we conclude the paper. 
Technical results and proofs are presented in the Appendix.

\section{Problem Formulation}\label{sec:problem}
In this paper, we consider the following forms of functional $\CalT$ in \eqref{eqgoal}:
\begin{enumerate}
\item $\mathcal{T}(Z) = \mathit{E}[\eta(Z)]$ for some function $\eta:\Real^d\mapsto \Real$; \label{form:exp} 
\item $\mathcal{T}(Z) =  \inf\{z\in\Real:\pr(Z\leqslant z)\geqslant \tau\}$, i.e., the VaR of $Z$ at level $\tau\in(0,1)$. \label{form:risk}
\end{enumerate}
In our study, input $Z$ is the conditional random variable $\mathit{E}[Y|X]$. Functionals in the Form \ref{form:exp} are known as \emph{nested expectations} and are employed in supervised learning, including stochastic kriging and classification. On the other hand, functionals represented by Form \ref{form:risk} are linked to \emph{risk measures} and find applications in risk management.

The nested simulation usually involves the following three steps to generate data. 
(i) Generate $n$ i.i.d.  outer-level scenarios $\{\BFx_i:i=1,\cdots,n\}$. 
(ii) For each $\BFx_i$, generate $m$ i.i.d. inner-level samples from the conditional distribution of $Y$ given $X=\BFx_i$, denoted by $\{y_{ij}:j=1,\cdots,m\}$, where $y_{ij}$ is modeled as a true function of the conditional expectation $f(\BFx_i)=\mathit{E}[Y\vert \BFx_i]$ plus an observation noise at j-th inner-level, see assumption \ref{assump:subG} in section \ref{sec:LSE} . 
(iii) An estimator $\hat{f}(X)$ of $f(X)=\mathit{E}[Y\vert X]$, adapted to the samples $(X, Y)$, is constructed to estimate the  target $\theta$.

Let $\hat{f}_{(1)} \leqslant \cdots \leqslant \hat{f}_{(n)}$  denote the order statistics of $\{\hat{f}(\BFx_1),\cdots, \hat{f}(\BFx_n)\}$. Then, $\hat{f}_{(\lceil \tau n\rceil)}$ is the sample quantile at level $\tau$, 
where $\lceil z\rceil$ denotes the least integer greater than or equal to $z$.  
Nested simulation then estimates $\theta$ via 
\begin{equation}\label{eq:standard-estimator}
\hat\theta_{n,m} = 
\left\{
\begin{array}{ll}
n^{-1}\sum_{i=1}^n \eta(\hat{f}(\BFx_i) ),     &\quad\mbox{if } \mathcal{T}(\cdot)= \E[\eta(\cdot)],  \\[0.5ex]
\hat{f}_{(\lceil \tau n\rceil)},     &\quad  \mbox{if }\mathcal{T}(\cdot) = \mbox{quantile}.
\end{array}
\right.
\end{equation}

A central problem in nested simulation is budget allocation. Given a simulation budget $\Gamma$ and an estimator $\hat\theta_{n,m}$, the optimal values for $n$ and $m$ to minimize the error in estimating $\theta$ can be determined by analyzing the asymptotic behavior of the statistics $|\hat\theta_{n,m} - \theta|$. \cite{GordyJuneja10} and \cite{ZhangLiuWang22} demonstrate that, without any assumption on $f$, the simulation budget should be allocated in such a way that $n\asymp \Gamma^{2/3}$ and $m\asymp \Gamma^{1/3}$ as $\Gamma\to\infty$, resulting in 
\[
\left(\mathit{E}\left[(\hat\theta_{n,m}-\theta)^2\right]\right)^{1/2} \asymp \Gamma^{-1/3}.
\] 
\cite{wang2024smooth} improves upon these results by imposing a smoothness condition on $f$ and shows that, for $f$ smooth enough, the simulation budget can be reduced to $n\asymp \Gamma$ and $m\asymp 1$ as $\Gamma\to\infty$, leading to
\[
|\hat\theta_{n,m}-\theta| = \mathcal{O}_p(\Gamma^{-1/2}).
\]

\section{LSE on Sieve}\label{sec:LSE}
In standard nested simulations, the estimator $\hat{f}(\BFx_i)$ can be calculated as the sample averages by $\bar{y}(\BFx_i) = \frac{1}{m}\sum_{j=1}^m y_{ij}$. Alternatively, it can be expressed as the following least squared estimator:
\begin{equation}\label{eq:sample_avg}
\hat{f} = \argmin_{h\in\mathcal{F}_n}\|h-\bar{y}\|_n^2
\end{equation}
where $\mathcal{F}_n$ is defined as $\{\ind_{\{\BFx_i=\BFx\}}:i=1,\cdots,n\}$. On the other hand, \cite{wang2024smooth} constructs $\hat{f}$ using KRR, which can be seen as the following least squared estimator:
\begin{equation}\label{eq:KRR}
\hat{f} = \argmin_{h\in\mathcal{H}(\delta_n)}\|h-\bar{y}\|_n^2
\end{equation}
where $\mathcal{H}(\delta)$ is defined as $\{h\in\mathcal{H}: \|h\|_{\mathcal{H}}\leqslant \delta\}$, $\|\cdot\|_{\mathcal{H}}$ denotes the norm of a Sobolev space $\mathcal{H}$, and $\delta_n>0$ is adaptive to sample sizes $n,m$; see  \citep{KanHenSejSri18,adams2003sobolev} for details.

Both \eqref{eq:sample_avg} and \eqref{eq:KRR} are evidently instances of LSE on sieves. To see this, we begin by introducing the definition of LSE on sieve. Suppose the true function $\mathit{E}[Y|X=\BFx]$ resides in a function space $\CalF$. Consider another space of functions, denoted as $\mathcal{F}_{n,m}$, which is adaptable to outer-level scenarios $n$ and inner-level samples $m$. This space does not necessarily need to be a subspace of \(\mathcal{F}\), but it should not exceed \(\mathcal{F}\) in complexity. The concept of complexity will be discussed in Remark \ref{remark:donsker} following the introduction of the concepts of covering number and entropy. Define
\begin{equation}\label{eq:sieve}
    \hat{f}_{n,m}=\argmin_{h\in\mathcal{F}_{n,m}}\|h-\bar{y}\|_n^2.
\end{equation}
We say that $\CalF_{n,m}$ is
a sieve and $\hat{f}_{n,m}$ is an LSE on sieve. The sieve $\CalF_{n,m}$ can be a function space, a vector space, or a neural network accordingly.

Similar to \cite{wang2024smooth}, we impose the following assumptions on the data. 

\begin{assumption}\label{assump:support}
The support of $X$ is a bounded convex set  $\Omega\subset \Real^d$.
Moreover, $X$ has a probability density function that is bounded above and below away from zero.
\end{assumption}

\begin{assumption}\label{assump:subG}
 $y_{ij}$ conditioned on $\BFx_i$ is of the form
$$y_{ij} = f({\BFx}_i) + \varepsilon_{ij}$$
where $\varepsilon_{ij}$ represents independent zero-mean  random variables, which may not have identical distributions but satisfy the following $\sigma^2$-sub-Gaussian assumption:
\[\max_{i,j} \E e^{t\varepsilon_{ij}}\leqslant e^{\sigma^2t^2/2},\quad \forall t\in \Real.\]
\end{assumption}
A sub-Gaussian random variable represents a type of probability distribution characterized by exponential tail decay, ensuring that its tails are at least as contained as those of a Gaussian distribution. This property makes sub-Gaussian distributions particularly useful in modeling scenarios with controlled variability. Sub-Gaussian random variables are widely applicable across various fields. For instance, in our earlier customer arrival example, the inter-arrival times can often be considered bounded in real-world applications. Consequently, the queuing system can be effectively modeled using sub-Gaussian random variables.



In contrast to \cite{wang2024smooth}, which relies on specific assumptions about the underlying function $f$ and employs the KRR technique, our study adopts a more generalized framework. We investigate the necessary conditions governing the relationship between the complexity of $\mathcal{F}_{n,m}$ and the underlying function $f$, where the complexity of a space is quantified by the following measures.
\begin{definition}
    Let $\mathcal{M}$ be a space equipped with metric $\|\cdot\|$, and let $\vee$ denote the maximum of two values. For example, $a\vee b=\max(a,b)$. The \emph{$u$-covering number} $N(u,\mathcal{M},\|\cdot\|)$ is defined as the minimum number of balls with radius $u$ necessary to cover $\mathcal{M}$. The \emph{$u$-entropy} of $\mathcal{M}$ is then $H(u,\mathcal{M},\|\cdot\|)=\log N(u,\mathcal{M},\|\cdot\|) \vee 0$.
\end{definition}
\begin{remark} \label{remark:cover_num}
The \emph{$u$-entropy} of $\mathcal{M}$ determined by the minimum number of small subsets is needed to cover the space, which represents the complexity of space; see Figure \ref{fig:cover_num} for illustration. The \emph{$u$-entropy}  is pivotal in evaluating the complexity of function spaces and the capacity of certain models, where high metric entropy suggests a need for more data to accurately capture the underlying structures.
\end{remark}

\begin{figure}[htbp]
    \centering
    \includegraphics[scale=0.15]{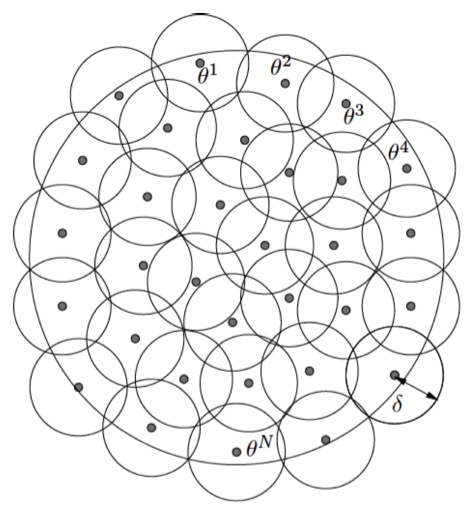}
    \caption{Illustration of Covering Sets. A \emph{$u$-covering} of $\mathcal{M}$ is a collection of element $\{\theta_1, \cdots, \theta_N\}$, such that for each $\theta\in \mathcal{M}$, there exists a $j\in\{1,\cdots, N\}$, such that $\|\theta-\theta_j\|\leqslant \delta$. (Figure 5.1 in \cite{Wainwright19}. )}\label{fig:cover_num}
\end{figure}

Following the above definition, we impose an additional assumption regarding the properties of the spaces $\CalF_{n,m}$ and $\CalF$. This assumption is mild and can be satisfied by both  $\CalF_n$ in \eqref{eq:sample_avg} and $\CalH(\delta_n)$ in \eqref{eq:KRR}.
\begin{assumption}\label{assump:b_bounded}
    (i) $H(u,\CalF_{n,m},\|\cdot\|_n)\leqslant H(u,\CalF,\|\cdot\|_n)<\infty$ for any $u>0$; (ii) there exists a constant $b<\infty$ such that $\sup_{h\in\CalF\cup\CalF_{n,m}}\|h\|_{L_\infty}<b$; (iii) $\CalF$ and $\CalF_{n,m}$ are Donsker with respect to the distribution of $X$ (see Definition \ref{def:donsker} in Appendix \ref{append:back_know} for details).
\end{assumption}
\begin{remark} \label{remark:donsker}
The covering number of $\CalF_{n,m}$ is dependent on $n$ and $m$; larger values of $n$ and $m$ lead to an increased covering number. This implies that we need to expand the capacity of $\CalF_{n,m}$ with greater knowledge of the target function spaces. Intuitively, a class of functions is called a Donsker class if, when you draw a large number of samples from a population and calculate certain statistics for each sample, the distribution of these statistics converges to a Gaussian process. This convergence mirrors the Central Limit Theorem for sums of random variables, yet it extends to a class of functions rather than applying to a singular statistic.
\end{remark}
In Assumption \ref{assump:support},  we assume that the support of $X$ is convex for simplicity in our mathematical treatment. This assumption can be extended to any compact set that meets the interior cone condition and features a Lipschitz boundary. Furthermore, we can extend our results to $\Real^d$ by applying the truncation technique to light-tailed densities. See \cite[Remark 3]{kohler2009optimal} for an example of the truncation technique.

In Assumption \ref{assump:subG}, the sub-Gaussian assumption for the noise terms can be generalized to a sub-exponential distribution, by applying an analysis framework similar to that outlined in this section. For the technical details of dealing with sub-exponential distribution, please refer to Section 10.1 in \cite{geer2000empirical}.  For errors with a heavy-tail distribution, our analytical approach may still be applicable. See \cite{kuchibhotla2022least} for a discussion on addressing heteroscedastic and heavy-tailed errors.

In Assumption \ref{assump:b_bounded},  uniform boundedness assumption for $\CalF$ and $\CalF_{n,m}$ is crucial in establishing the convergence rate of $\hat{\theta}_{n,m}$. This boundedness assumption is commonly employed in nonparametric regression settings, as discussed in \citep{guntuboyina2018nonparametric,koltchinskii2011oracle,rakhlin2017empirical}. If $\CalF$ and $\CalF_{n,m}$ is a non-Donsker class, the theoretical analysis for the convergence rate of $\hat{\theta}_{n,m}$ becomes significantly more complex. Furthermore, in the case of a general non-Donsker class $\CalF$, the LSE can be sub-optimal, as indicated by \cite{kur2021minimal}. Therefore, our estimator could be sub-optimal when $\CalF$ belongs to a non-Donsker class.

Before we present the main theorem, let us first define some key terms. Let
\[f^*_{n,m}=\argmin_{h\in\CalF_{n,m}}\|h-f\|_n,\quad\text{and}\quad \hat{f}_{n,m}=\argmin_{h\in\CalF_{n,m}}\|h-\bar{y}\|_n.\]
For $0<{\delta}<2^7\sigma/\sqrt{m}$, we define
\begin{align*}
    &\CalF_n({\delta})=\{h\in\CalF_{n,m}: \|h-f^*_{n,m}\|_n\leqslant {\delta}\},\\
    &  \mathcal{J}({\delta})=\int_{\frac{\delta^2\sqrt{m}}{2^7{\sigma}}}^{{\delta}}\sqrt{H(u,\CalF_n({\delta}),\|\cdot\|_n)}\mathrm{d}u \vee {\delta}.
\end{align*}
Given the entropy of $\CalF_{n,m}$, we are ready
to present our first theorem regarding the mean squared error (MSE) of LSE on sieve.

\begin{theorem}\label{thm:LSE_sieve}
Suppose Assumptions~\ref{assump:support}-- \ref{assump:b_bounded} hold. Take any upper bound $\Psi({\delta})\geqslant \mathcal{J}({\delta})$ in  such a way that $\Psi({\delta})/{\delta}^2$ is a non-decreasing function for any $0<{\delta}<2^7{\sigma}/\sqrt{m}$. Then for a constant $C$ only depends on $\sigma$, and constant $c$ is a universal positive constant independent of $n$, $m$ and the \emph{$u$-entropy} of $\CalF_{n,m}$, and for
\begin{equation}\label{eq:thm1_condition}
    \sqrt{nm}\delta^2_{n,m}\geqslant c\Psi(\delta_{n,m}),\quad \forall n,m,
\end{equation}
we have
\begin{align}\label{eq:LSE_convergence}
   &\pr\left( \|\hat{f}_{n,m}-f^*_{n,m}\|_n\geqslant \delta_{n,m}\right) \leqslant {\pr}\left(\|\bar{\varepsilon}\|_n\geqslant \frac{\sigma}{\sqrt{m}}\right)+C\exp\left[-\frac{nm\delta_{n,m}^2}{C^2}\right],\nonumber\\
   &\|\hat{f}_{n,m}-f^*_{n,m}\|_n\leqslant \CalO_{\pr}(\delta_{n,m}+\|f^*_{n,m}-f\|_n).
\end{align}
\end{theorem}

Theorem \ref{thm:LSE_sieve} considers both the sample sizes $n$ of outer scenario $\{\BFx_i\}$ and $m$ of the inner scenarios $\{y_{ij}\}$. From this theorem, the MSE of the LSE can be decomposed into two parts: the estimation error $\mathcal{O}_p(\delta_{n,m})$  and the approximation error $\|f_{n,m}^*-f\|_n$. Specially, the $\delta_{m,n}$ corresponds to variance and the $\|f^*_{n,m}-f\|_n$ corresponds to bias of the estimator.  Fundamentally, for a function class $\CalF_{m,n}$ with high complexity, the associated probability distribution in Theorem \ref{thm:LSE_sieve} is likely to exhibit a heavier tail, leading to increased variance. Conversely, a function class $\mathcal{F}_{m,n}$ of high complexity is also capable of closely approximating the true underlying function, resulting in reduced bias. Hence, $\delta_{m,n}$ and $\|f^*-f\|_n$  encapsulate the concept of a bias-variance trade-off in non-parametric problem settings.

\section{Asymptotic Analysis}\label{sec:asymptotic} 
If the asymptotic behavior of \( |\hat{\theta}_{n,m} - \theta| \) with respect to the sample sizes \( n \) and \( m \) is established, the optimal allocation of the budget \( \Gamma = nm \) can be determined. The following theorem demonstrates high probability convergence as well as convergence in \( l^p \) for any \( p \geqslant 1 \). 

For the nested expectation functional $\CalT$, we have the following theorem:
 \begin{theorem}\label{thm:nested_convergence}
     Let $\mathcal{T}(\cdot) = \mathit{E}[\eta(\cdot)]$ and $\eta$ be a twice-differentiable function with bounded first- and second-order derivatives.
Suppose Assumptions \ref{assump:support}--\ref{assump:b_bounded} hold. Moreover, $m$ satisfies $\sqrt{m}(\delta_{n,m}+\|f_{n,m}^*-f\|_n)=\CalO(1)$.
Then,  \[|\hat \theta_{n,m}-\theta| = \CalO_{\pr}\left(n^{-\frac{1}{2}}+\delta_{n,m}^2+\|f^*_{n,m}-f\|_n+\hat{\CalL}\right),\]
and for any integer $1\leqslant p<\infty$,
 \[\left(\mathit{E}|\hat \theta_{n,m}-\theta|^p\right )^{1/p} = \CalO\left(n^{-\frac{1}{2}}+\delta_{n,m}^2+\|f^*_{n,m}-f\|_n+\hat{\CalL}\right),\]
where $\delta_{n,m}$ and $f^*_{n,m}$ are as defined in Theorem \ref{thm:LSE_sieve} and
\begin{align*}
 \hat{\CalL}=\left|\mathit{E}\left[\eta'(f(X))\right]\mathit{E}_{X,\{\varepsilon_i\}} [f^*(X)-\hat{f}(X)]\right|.
\end{align*}
 \end{theorem}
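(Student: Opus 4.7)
The plan is to decompose $\hat\theta_{n,m}-\theta$ via a first-order Taylor expansion of $\eta$, isolate the leading Monte Carlo term, and control the remaining pieces using Theorem~\ref{thm:LSE_sieve} together with the Donsker property from Assumption~\ref{assump:b_bounded}(iii). First I would write
\begin{align*}
\hat\theta_{n,m}-\theta
&= \underbrace{\frac{1}{n}\sum_{i=1}^n\bigl[\eta(f(\BFx_i))-\E\eta(f(X))\bigr]}_{=:B}
+ \underbrace{\frac{1}{n}\sum_{i=1}^n\bigl[\eta(\hat f(\BFx_i))-\eta(f(\BFx_i))\bigr]}_{=:A}.
\end{align*}
The summands in $B$ are i.i.d.\ and bounded (using Assumption~\ref{assump:b_bounded}(ii) and the boundedness of $\eta'$), so the central limit theorem gives $B=\CalO_{\pr}(n^{-1/2})$.

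Next, I would Taylor-expand each $\eta(\hat f(\BFx_i))$ about $f(\BFx_i)$ to obtain $A=L+R$ with
\begin{align*}
L=\frac{1}{n}\sum_{i=1}^n\eta'(f(\BFx_i))\bigl(\hat f(\BFx_i)-f(\BFx_i)\bigr),\qquad
|R|\leq\tfrac12\|\eta''\|_\infty\|\hat f-f\|_n^2.
\end{align*}
By Theorem~\ref{thm:LSE_sieve} and the triangle inequality, $\|\hat f-f\|_n=\CalO_{\pr}(\delta_{n,m}+\|f^*_{n,m}-f\|_n)$, hence $|R|=\CalO_{\pr}(\delta_{n,m}^2+\|f^*_{n,m}-f\|_n^2)$; the squared approximation term is absorbed into $\|f^*_{n,m}-f\|_n$ by the uniform boundedness of $\CalF\cup\CalF_{n,m}$. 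Splitting $\hat f-f=(\hat f-f^*_{n,m})+(f^*_{n,m}-f)$ decomposes $L=L_1+L_2$, and Cauchy--Schwarz in the empirical inner product immediately gives $|L_1|\leq\|\eta'\circ f\|_n\cdot\|f^*_{n,m}-f\|_n=\CalO_{\pr}(\|f^*_{n,m}-f\|_n)$.

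The essential piece is $L_2=n^{-1}\sum_i\eta'(f(\BFx_i))(\hat f(\BFx_i)-f^*_{n,m}(\BFx_i))$. Setting $\bar\eta:=\E[\eta'(f(X))]$, I would decompose
\begin{align*}
L_2=\bar\eta\cdot\frac{1}{n}\sum_{i=1}^n\bigl(\hat f(\BFx_i)-f^*_{n,m}(\BFx_i)\bigr)
+\frac{1}{n}\sum_{i=1}^n\bigl(\eta'(f(\BFx_i))-\bar\eta\bigr)\bigl(\hat f(\BFx_i)-f^*_{n,m}(\BFx_i)\bigr).
\end{align*}
For the first summand, comparing the empirical average with its population counterpart $\E_{X,\{\varepsilon_i\}}[\hat f(X)-f^*_{n,m}(X)]$ recovers $\hat{\CalL}$ exactly (after taking absolute values), with a Donsker-controlled discrepancy of order $\CalO_{\pr}(n^{-1/2})$. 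For the centered summand, I would apply a local empirical-process argument to the class $\{(\eta'\circ f-\bar\eta)(h-f^*_{n,m}):h\in\CalF_{n,m}\}$, combining the Donsker property with $\|\hat f-f^*_{n,m}\|_n=\CalO_{\pr}(\delta_{n,m})$ from Theorem~\ref{thm:LSE_sieve} to obtain a bound of order $\CalO_{\pr}(\delta_{n,m}/\sqrt n)\subseteq\CalO_{\pr}(n^{-1/2})$ under the standing condition $\sqrt{m}\delta_{n,m}=\CalO(1)$. Summing $|B|+|R|+|L_1|+|L_2|$ then produces the claimed bound.

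The main obstacle is the centered summand in $L_2$: it is the supremum of an empirical process indexed by a data-adaptive (and hence random) class of functions, so controlling it uniformly relies on Assumption~\ref{assump:b_bounded}(iii) and a peeling argument over shells $\{h\in\CalF_{n,m}:2^{k-1}\delta_{n,m}\leq\|h-f^*_{n,m}\|_n\leq 2^k\delta_{n,m}\}$, with entropy integrals of the same flavor as $\mathcal{J}(\delta)$ appearing in Theorem~\ref{thm:LSE_sieve}. A secondary subtlety is the comparison of $n^{-1}\sum_i(\hat f-f^*_{n,m})(\BFx_i)$ to $\E_{X,\{\varepsilon_i\}}[\hat f(X)-f^*_{n,m}(X)]$, since $\hat f$ and $f^*_{n,m}$ both depend on $\{\BFx_i\}$; conditioning on the inner noise and invoking the Donsker class uniformly over possible realizations of $\hat f\in\CalF_{n,m}$ should reduce this to an $\CalO_{\pr}(n^{-1/2})$ term that merges into the Monte Carlo contribution $B$.
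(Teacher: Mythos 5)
Your decomposition is essentially the paper's own: the same split into a Monte Carlo term plus a plug-in term, the same second-order Taylor expansion, the same Cauchy--Schwarz treatment of the $f-f^*_{n,m}$ part, and the same centering of $\eta'(f(\BFx_i))$ around $\E[\eta'(f(X))]$ to isolate $\hat{\CalL}$ plus a CLT/Donsker remainder. The one place where your claim outruns your argument is the centered summand of $L_2$: you assert a bound of order $\CalO_{\pr}(\delta_{n,m}/\sqrt{n})$, but that rate is what you would get for a \emph{single fixed} function $h$ with $\|h\|_n\leq\delta_{n,m}$; the uniform bound over the localized class $\{h-f^*_{n,m}:h\in\CalF_{n,m},\ \|h-f^*_{n,m}\|_n\leq\delta_{n,m}\}$ that your peeling-plus-entropy-integral argument actually delivers is of order $n^{-1/2}\int\sqrt{H(u,\cdot,\|\cdot\|_n)}\,du$, which by the defining inequality $\sqrt{nm}\,\delta_{n,m}^2\geq c\,\Psi(\delta_{n,m})$ is of order $\delta_{n,m}^2$, not $\delta_{n,m}/\sqrt{n}$. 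This is exactly how the paper handles it (applying the van de Geer--type modulus-of-continuity lemma with $\delta=\delta_{n,m}^2$), and the discrepancy is harmless for the theorem because $\delta_{n,m}^2$ already appears in the stated bound; but as written your intermediate claim is not justified and should be replaced by $\CalO_{\pr}(\delta_{n,m}^2)$. Everything else, including the $\CalO_{\pr}(n^{-1/2})$ comparison of the empirical average of $\hat f-f^*_{n,m}$ with its population mean, matches the paper's proof.
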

 It is worth noticing that $\hat{\CalL}$ depends on the linearity $\hat{f}_{n,m}$. For linear estimators, i.e., $\hat{f}_{n,m}=\sum_{i=1}^n\phi_i(\cdot;X)\bar{y}_i$,  $\hat{\CalL}=0$ because
 \begin{align*}
      &\mathit{E}_{X,\{\varepsilon_i\}} [f^*(X)-\hat{f}(X)]=\mathit{E}_{X,\{\varepsilon_i\}}\bigl[\sum_{i=1}^n\phi_i \left(f(\BFx_i)+\varepsilon_i\right)-\phi_if(\BFx_i)\bigr]=0.
 \end{align*}
 Linear estimators include many commonly used estimators, such as KRR and spline regression. For some non-linear estimator, such as  wavelet-threshold estimators \citep{donoho1995noising}, it can be bounded by the approximation error $\|f^*_{n,m}-f_{n,m}\|_{L_2}$. For highly non-linear estimators such as neural networks, the analysis of $\hat{\CalL}$ becomes more complicated. The convergence behavior of neural networks is contingent upon diverse assumptions and architectures. Notably, for two-layer neural networks and infinitely wide neural networks, their equivalence to kernel methods has been elucidated through approaches like the Neural Tangent Kernel \cite{jacot2018neural}, enabling feasible analysis. Nonetheless, analyzing the behavior of other neural network architectures poses a greater challenge due to the complexities and diverse nature of their structures and functionalities.
 
 If $\CalT$ is the risk measure, we need to impose two more assumptions.
 \begin{assumption}\label{assump:L_infinity_embedding}
     (i) $\CalF$ and $\CalF_{n,m}$ are star-shaped, i.e., for any $h$ in $\CalF$ or $\CalF_{n,m}$, $ah$ is also in   $\CalF$ or $\CalF_{n,m}$ for all $0\leqslant a \leqslant 1$. (ii) for any $\delta >0$ and $h\in\CalF\cup\CalF_{n,m}$ with $\|h\|_{L_2(X)}\leqslant \delta$,  there exists universal constant $\alpha \in(0,1]$ such that $\|h\|_{L_\infty}\leqslant \CalO(\delta^\alpha)$. 
 \end{assumption}

\begin{assumption}\label{assump:margin-strong}
There exist positive constants $C_1$, $C_2$, $t_0$, and $\beta\leqslant \gamma$ such that 
\begin{align*}
C_2 t^{\gamma} \leqslant \pr(|f(X)-z|\leqslant t)\leqslant C_1 t^{\beta},\quad 
\forall t\in(0,t_0],\;\forall z\in \{f(\BFx):\BFx\in\Omega \}.
\end{align*}
\end{assumption}

Assumption \ref{assump:L_infinity_embedding} is for handling the error between the order statistics $\hat{f}_{(\lceil \tau n\rceil)}$ and the true quantile of $f(X)$, where we require point-wise asymptotic property. It is worth noting that this assumption is mild and can be met when $\mathcal{F}$ is chosen from commonly encountered spaces. For example, when $\CalF$ is an order-$m$ Sobolev space, we have $\alpha=1-1/(2m)$ \citep[Lemma 10.9]{geer2000empirical}, \cite[Sec 2.5]{van2014uniform}; when $\CalF$ is the reproducing kernel Hilbert space (RKHS) generated by  Gaussian kernel, we have $\alpha=1$ \citep[Lemma E.4]{ding2024random}; when $\CalF$ is a ReLU neural network, we have $\alpha=d/(d+2)$ as we will show in Section \ref{sec: ReLU}. These relationships are often referred as function space embeddings in general.  Although estimating the parameter $\alpha$, indicative of the function space's smoothness, frequently poses a challenge. Recent theoretical research suggests that by utilizing adaptive procedures, KRR and neural networks can retain robust performance even when facing smoothness misspecification. For more information, refer to \citep{dicker2017kernel, blanchard2018optimal, cherief2020convergence}.

Assumption \ref{assump:margin-strong}  is similar to Assumption 5 of \cite{wang2024smooth}.  To delve deeper into this assumption, let's define $F(z):=\pr((f(X))\leqslant z)$ as the cumulative distribution function (CDF) of $f(X)$, and its inverse, $F^{-1}(q):=\inf\{z:F(z)\geqslant q\}$, as the quantile function. The VaR for a given risk level $\tau\in(0,1)$ is thus $\zeta_{\rm VaR}:=\text{VaR}(f(X))=F^{-1}(\tau)$, and its estimation $\hat{\zeta}:=\hat{f}_{\lceil \tau n\rceil}$ is based on an estimator $\hat{f}$ for $f$. The aim is to show the convergence of $\hat{\theta}_{n,m}$ to $\theta$ through the convergence of $\hat{\zeta}$ to $\zeta_{\rm VaR}$. In the context of Assumption \ref{assump:margin-strong}, it is pertinent to note that $\pr(|f(X)-z|\leqslant t)=F(z+t)-F(z-t)$, and inherently, $F$ is a non-decreasing function. Therefore, Assumption \ref{assump:margin-strong} can be translated to:
\begin{align}
    &|F(z)-F(z')|\leqslant C_1|z-z'|^{\beta}, \label{eq:F_holder}\\
    &|F(z)-F(z')|\geqslant C_2|z-z'|^{\gamma}, \label{eq:F_inverse_holder}
\end{align}
for any $z,z'\in\{f(x):x\in\Omega\}$, with $C_1$ and $C_2$ being positive constants. Here, condition \ref{eq:F_holder} indicates that $F$ exhibits $\beta$-Hölder continuity, while condition \ref{eq:F_inverse_holder} signifies that $F^{-1}$ displays $\gamma^{-1}$-Hölder continuity. It's noteworthy that if $\beta \leqslant 1$, then a $\beta$-Hölder continuous function over an interval cannot be a constant. Given that neither $F$ nor $F^{-1}$ is inherently constant by definition, it logically follows from Assumption 5 that $\beta \leqslant 1 \leqslant \gamma$.

Equipped with these tools, we are now ready to present the following theorem.


\begin{theorem}\label{thm:risk_convergence}
    Let $\mathcal{T}(\cdot) = \inf\{z\in\Real:\pr(\cdot \leqslant z)\geqslant \tau\}$ for some $\tau\in(0,1)$. Suppose Assumptions~\ref{assump:support}--\ref{assump:margin-strong} hold. Then
    \[|\hat \theta_{n,m}-\theta|=\CalO_{\pr}\left(\left|\frac{\Delta_n}{\sqrt{m}}+\|f^*_{n,m}-f\|_n+\delta_{n,m}\right|^{\kappa}+n^{-\frac{1}{2\gamma}}\right), \]
    and for any $1\leqslant p<\infty$,
     \[\left(\mathit{E}|\hat \theta_{n,m}-\theta|^p\right )^{1/p} = \CalO\left(\left|\frac{\Delta_n}{\sqrt{m}}+\|f^*_{n,m}-f\|_n+\delta_{n,m}\right|^{\kappa}+n^{-\frac{1}{2\gamma}}\right),\]
where $\kappa=\alpha\beta/\gamma$, $\delta_{n,m}$ and $f^*_{n,m}$ are as defined in Theorem \ref{thm:LSE_sieve} and 
\begin{equation*}
    \Delta_n=\inf\left\{\delta: \frac{\delta^2}{b}\geqslant \inf_{\varepsilon\geqslant 0}\left\{4\varepsilon+\frac{C}{\sqrt{n}}\int_\varepsilon^{\delta} \sqrt{H(u,\CalH\cap B(\delta,\|\cdot\|_n),\|\cdot\|_n)}\mathrm{d}u\right\}\right\}.
\end{equation*}
\end{theorem}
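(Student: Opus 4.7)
The plan is to reduce the quantile analysis to an $L_\infty$-control of $\hat F_n - F$ and then invert through the margin condition. Observe that $\hat\theta_{n,m}\leq z$ if and only if $\hat F_n(z):=\frac{1}{n}\sum_{i=1}^n\ind\{\hat f(\BFx_i)\leq z\}\geq \lceil\tau n\rceil/n$, so the quantile error is controlled by $\sup_z|\hat F_n(z)-F(z)|$, where $F(z)=\pr(f(X)\leq z)$. Writing $F_n(z)=\frac{1}{n}\sum_i\ind\{f(\BFx_i)\leq z\}$, I decompose
\[
\hat F_n - F \;=\; (\hat F_n - F_n) + (F_n - F)
\]
and handle the two terms separately before inverting through Assumption~\ref{assump:margin-strong}.

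For the empirical-process piece $\sup_z|F_n-F|$, the DKW inequality gives $\CalO_{\pr}(n^{-1/2})$; the lower margin bound $\pr(|f(X)-\theta|\leq t)\geq C_2 t^\gamma$ converts a uniform CDF error of size $\eta$ into a quantile error of size at most $(\eta/C_2)^{1/\gamma}$, yielding the $n^{-1/(2\gamma)}$ contribution. For the plug-in piece, if $\|\hat f - f\|_{L_\infty}\leq\epsilon$ then $|\ind\{\hat f(\BFx_i)\leq z\}-\ind\{f(\BFx_i)\leq z\}|\leq\ind\{|f(\BFx_i)-z|\leq\epsilon\}$, whence
\[
|\hat F_n(z)-F_n(z)|\leq \frac{1}{n}\sum_{i=1}^n \ind\{|f(\BFx_i)-z|\leq\epsilon\},
\]
which concentrates around $\pr(|f(X)-z|\leq\epsilon)\leq C_1\epsilon^\beta$ by Hoeffding and the upper margin bound. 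Inverting through the $\gamma$-side of the margin then contributes $\CalO_{\pr}(\epsilon^{\beta/\gamma})$ to the quantile error.

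It remains to produce $\epsilon=\|\hat f-f\|_{L_\infty}$. I decompose $\hat f-f=(\hat f-f^*_{n,m})+(f^*_{n,m}-f)$. Theorem~\ref{thm:LSE_sieve} gives $\|\hat f-f^*_{n,m}\|_n=\CalO_{\pr}(\delta_{n,m})$; the Donsker property in Assumption~\ref{assump:b_bounded}(iii) upgrades this to an $\|\cdot\|_{L_2(X)}$ bound of the same order, and applying the $L_\infty$-embedding of Assumption~\ref{assump:L_infinity_embedding} (using the star-shape property to rescale the difference back into the admissible class) yields $\|\hat f-f^*_{n,m}\|_{L_\infty}=\CalO_{\pr}(\delta_{n,m}^\alpha)$; analogously $\|f^*_{n,m}-f\|_{L_\infty}=\CalO_{\pr}(\|f^*_{n,m}-f\|_n^\alpha)$. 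The remaining $\Delta_n/\sqrt m$ term is the $L_\infty$ critical radius of the centered noise empirical process $h\mapsto\frac{1}{n}\sum_i\bar\varepsilon_i h(\BFx_i)$, whose per-coordinate variance is $\sigma^2/m$; this is precisely what the fixed-point definition of $\Delta_n$ encodes, once the inner integrand is rescaled by $1/\sqrt m$. Combining all three bounds yields $\epsilon=\CalO_{\pr}\bigl((\Delta_n/\sqrt m+\|f^*_{n,m}-f\|_n+\delta_{n,m})^\alpha\bigr)$, and the claimed rate with $\kappa=\alpha\beta/\gamma$ follows.

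The principal obstacle is the $L_\infty$-upgrade of the empirical $L_2$ bound from Theorem~\ref{thm:LSE_sieve}: the noise component demands a separate localized entropy argument, encoded by the fixed-point condition defining $\Delta_n$, and the $L_\infty$-embedding in Assumption~\ref{assump:L_infinity_embedding}, stated only for single elements of $\CalF\cup\CalF_{n,m}$, must be applied to differences via the star-shape rescaling. The subsequent CDF-to-quantile inversion through the two-sided margin condition and the empirical-process control of $F_n-F$ use only classical tools.
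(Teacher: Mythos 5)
Your overall architecture matches the paper's: reduce the quantile error to a uniform CDF comparison, split into a plug-in piece and an empirical-process piece, invert both through the two-sided margin condition of Assumption~\ref{assump:margin-strong} to obtain the exponents $\beta/\gamma$ and $1/(2\gamma)$, and feed in an $L_\infty$ bound on $\hat f-f$ obtained by upgrading the empirical-norm bound of Theorem~\ref{thm:LSE_sieve} through the embedding of Assumption~\ref{assump:L_infinity_embedding}. The paper itself outsources the CDF-to-quantile inversion to the proof of Theorem~4 of \cite{wang2023smooth} and only supplies the new ingredient, namely the $L_\infty$ control of Lemma~\ref{lem:f_hat_f_L_infinity}; your explicit DKW-plus-margin argument is the same mechanism spelled out.

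The gap is in how you produce the $\Delta_n/\sqrt m$ term. You assert that the Donsker property alone upgrades $\|\hat f-f^*_{n,m}\|_n=\CalO_{\pr}(\delta_{n,m})$ to an $L_2(X)$ bound ``of the same order,'' and then attribute $\Delta_n/\sqrt m$ to a separate $L_\infty$ critical radius of the noise process $h\mapsto\langle\bar\varepsilon,h\rangle_n$. Neither half is right. Donskerness gives at best an additive correction of order $n^{-1/4}$ to the norm (via $\sup_h\bigl|\|h\|_n^2-\|h\|_{L_2}^2\bigr|=\CalO_{\pr}(n^{-1/2})$), not a same-order upgrade; the correct statement is the localized one in the paper's Theorem~\ref{thm:wainwright} and Lemma~\ref{lem:h_L2_norm} (adapted from Theorem~14.1 and Proposition~14.25 of \cite{Wainwright19}), namely $\sup_h\bigl|\|h\|_n-\|h\|_{L_2}\bigr|\leq c_1\Delta_n$ with high probability, where $\Delta_n$ is exactly the Rademacher-complexity fixed point appearing in the statement. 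That is where $\Delta_n$ enters: it quantifies the empirical-versus-population norm discrepancy of the class and has nothing to do with the noise $\bar\varepsilon$ (note its definition contains no $m$); the noise is already fully accounted for by $\delta_{n,m}$ via Theorem~\ref{thm:LSE_sieve}, so your separate noise term double-counts. The factor $1/\sqrt m$ instead enters through the normalization $\sqrt m\,\|\hat f-f^*_{n,m}\|_n=\CalO_{\pr}(1)$ of Lemma~\ref{lem:sqrt_m_hat_f-f} under the hypothesis $\sqrt m(\delta_{n,m}+\|f^*_{n,m}-f\|_n)=\CalO(1)$, combined with the star-shape property to keep the rescaled difference in the class, giving $\|\hat f-f^*_{n,m}\|_{L_2}\leq\|\hat f-f^*_{n,m}\|_n+\CalO_{\pr}(\Delta_n/\sqrt m)$ before the embedding is applied. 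As written, your same-order $L_2$ upgrade is false in general and your claimed noise-process fixed point is asserted rather than proved, so this step needs to be replaced by the localized norm-comparison argument.
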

Theorem \ref{thm:risk_convergence} elucidates that higher values of $\beta$ or lower values of $\gamma$ (equivalently, higher values of $\gamma^{-1}$) correlate with a more rapid convergence to the actual value $\theta$, particularly when considering the VaR as $\CalT$. As previously discussed, the parameters $\beta$ and $\gamma^{-1}$ represent the degree to which $F$ and $F^{-1}$ adhere to the Hölder condition, which in turn reflects their levels of smoothness. Therefore, for VaR, Theorem \ref{thm:risk_convergence} underscores how the estimator's effectiveness is influenced not just by the smoothness of $f(X)$'s cumulative distribution function (CDF) and its quantile function but also by the smoothness of $f$ itself and the dimensionality of the problem.

Given the implication from Assumption \ref{assump:margin-strong} that $\beta\leqslant 1\leqslant \gamma$, the optimal scenario for achieving the fastest convergence rate is when $\beta = \gamma = 1$. This condition implies that both $F$ and $F^{-1}$ exhibit Lipschitz continuity. Under these circumstances, the convergence rate posited in Theorem \ref{thm:risk_convergence} matches that of Theorem \ref{thm:nested_convergence}, assuming that the function class $\CalF$ and the sieve $\CalF_{n,m}$ fulfill Assumption \ref{assump:L_infinity_embedding} with $\alpha=1$. This observation highlights the intricate relationship between the smoothness of the CDF and quantile function of $f(X)$, and the convergence behavior of the estimator, thereby extending the known influences of function smoothness and dimensionality on estimator performance.
\begin{remark}
    Theorems \ref{thm:nested_convergence} and \ref{thm:risk_convergence} establish that our high-probability convergence also implies convergence in \( L^p \). To simplify notation, explicit \( L^p \) results are omitted, but the subsequent discussions remain applicable to \( L^p \) convergence.
\end{remark}

\section{Examples}\label{sec:example}
In this section, we show some examples to illustrate our theorems. These examples make it clear that the Least Squared Estimation (LSE) on sieve offers a higher degree of flexibility and, in certain specific scenarios, outperforms the KRR estimator. 

\subsection{Kernel Ridge Regression with Inducing Points }
In this subsection, we demonstrate that Kernel Ridge Regression (KRR) can be formulated as a Least Squares Estimator (LSE) on a sieve that is independent of the data. Moreover, we show that by selecting a data-adaptive sieve, we can construct an LSE on the sieve that is equivalent to KRR with inducing points. This estimator achieves an improved convergence rate. While inducing points may introduce some bias, they also reduce variance, and in many cases, the reduction in variance outweighs the increase in bias, potentially leading to a lower overall MSE and further improving convergence. Furthermore, as noted in \cite{burt2019rates}, a very small set of inducing points can be effectively utilized for large datasets without introducing significant bias in hyperparameter selection.

\subsubsection{Nested Expectation}\label{sec:KRR_nestedE}
We first consider the following KRR in constrained form with sieve $\CalF_{n,m}$  a fixed subset of the RKHS  generated by the Mat\'ern-$\nu$ kernel. This space is  \emph{norm-equivalent} to the $s=\nu+d/2$-order \emph{Sobolev space}  $\mathscr{H}^s(\Omega)$ equipped with norm $\|\cdot\|_{\mathscr{H}^s(\Omega)}$. Under our framework, we can analyze the asymptotic properties of the KRR estimator regardless of the specific form of the Mat\'ern kernel. More precisely, the KRR problem is equivalent to the following constrained optimization problem for $\nu>d/2$:
\begin{equation}\label{eq:constrained_KRR}
    \min_{h}\|h-\bar{y}\|_n^2,\quad\text{s.t.}\ \|h\|_{\mathscr{H}^s(\Omega)}^2\leqslant \CalR
\end{equation}
where  $\CalR\geqslant \|f\|^2_{\mathscr{H}^s(\Omega)}$ is independent of $n,m$. In this case, the approximation error $\|f^*-f\|_n=0$ because $f\in\CalF_{n,m}=\{h:\|h\|_{\mathscr{H}^s(\Omega)}^2\leqslant \CalR\}$. Furthermore, according to \cite{Birman_1967}, the $u$-entropy of $\mathscr{H}^s(\Omega)$ is $u^{-\frac{d}{s}}$, and according to \cite[Theorem 6.3]{geer2000empirical}, $\CalF=\CalF_{n,m}$ is Donsker. Therefore, it is straightforward to check that $\CalF=\CalF_{n,m}$ satisfy Assumptions~\ref{assump:b_bounded} and \ref{assump:L_infinity_embedding}. Applying Theorem \ref{thm:LSE_sieve} yields
\[\delta_{n,m}=\inf\{\delta>0: \sqrt{nm}\delta^2\geqslant c \delta ^{1-\frac{d}{2m}}\}=\CalO\left(|nm|^{-\frac{s}{2s+d}}\right).\]
When $\eta$ has bounded second derivative, we can use Theorem~\ref{thm:nested_convergence} to derive that 
\[|\hat \theta_{n,m}-\theta|=\Big|\frac{1}{n}
\sum_{i=1}^n\hat{f}\circ\eta(\BFx_i)-\mathit{E}[\eta\circ f(X)]\Big| = \CalO_{\pr}\left(n^{-\frac{1}{2}}+|nm|^{-\frac{2s}{2s+d}}\right).\]
By setting $m=\CalO(1)$ and $n=\CalO(\Gamma)$, it is straightforward to derive that we can have the improved convergence rate $|\hat \theta_{n,m}-\theta|=\CalO(\Gamma^{-\frac{1}{2}})$. This result is exactly the same as the improved convergence rate in  \cite{wang2024smooth} for $\nu>d/2$. 

However, for $\nu\leqslant d/2$, \cite{wang2024smooth} has sub-optimal result. We now show that by adaptively choosing the sieve $\CalF_{n,m}$, we can still have the optimal convergence rate $\Gamma^{-\frac{1}{2}}$. Without loss of generality, suppose $X$ is uniformly distributed on $\Omega$. We then select $\CalS_{n}=\CalO(n^{\frac{d}{2s+d}})$ points from $\{\BFx_i\}$, denoted as $\{\tilde{\BFx}_i\}$ such that the filled distance of $\tilde{\BFx}_j$ satisfy
\[h_{\Omega,\tilde{\BFX}}=\sup_{\BFx\in\Omega}\inf_{j\in[\CalS_n]}\|\tilde{\BFx}_j-\BFx\|=\CalO(\CalS_n^{-\frac{1}{d}}).\]
Let $\CalF_{n,m}=\text{span}\{k(\tilde{\BFx}_i,\cdot)\}$ where $k$ is the Mat\'ern-$\nu$ kernel. According to \cite{wu1993local}, 
\[\inf_{h\in\CalF_{n,m}}\|h-f\|_n=\CalO_{\pr}(\CalS_n^{-s/d})=\CalO_{\pr}(n^{-\frac{s}{2s+d}}).\]
Because $\CalF_{n,m}$ is a finite-dimensional space, we can use \cite[Corollary 2.5]{geer2000empirical} to derive that
\[H(u,\CalF_n(\delta),\|\cdot\|_n)\leqslant Cn^{\frac{d}{2s+d}}\log(\frac{4\delta+u}{u}).\]
From straightforward calculations, it can be shown that $\delta_{n,m}=\CalO(n^{-\frac{s}{2s+d}}$). By setting $n=\CalO(\Gamma)$ and $m=\CalO(1)$, together with the fact that $s=\nu+d/2> d/2$, we have the same convergence rate $|\hat \theta_{n,m}-\theta| =\CalO_{\pr}(\Gamma^{-\frac{1}{2}})$.

The above LSE on sieve is also known as \emph{KRR with inducing points}. We can notice that KRR with inducing points has two advantages compared with KRR. Firstly, it can achieve the optimal convergence rate for all choices of $\nu>0$; secondly, it has lower computational complexity -- the computational time complexity of KRR is $\CalO(n^3)$ while ours is $\CalO(n\CalS_n^2)=\CalO(n^{\frac{2s+3d}{2s+d}})$. For analysis of the computational time complexity of KRR with inducing points, please refer to \cite{davies2015effective,  ding2020generalization} for details.

\subsubsection{Risk Measures}

In \cite{wang2024smooth}, it has been shown that the convergence rate of $|\hat{f}_{(\lceil\frac{\tau}{n}\rceil)}-\text{VaR}_\tau(f(X))|$ cannot achieve the optimal rate $\Gamma^{-\frac{1}{2}}$  for any Mat\'ern-$\nu$ kernel with $\nu<\infty$. In this study, we discuss the case that the underlying $f$ resides in the RKHS $\CalF=\CalH_k$ induced by the Gaussian kernel $k(\BFx,\BFx')=\exp(-\omega\|\BFx-\BFx'\|^2)$, which can be treated as the limit $\nu\to\infty$. Similar to Section \ref{sec:KRR_nestedE},  we rewrite the KRR in the constrained optimization form \eqref{eq:constrained_KRR} with sieve $\CalF_{n,m}=\{h: \|h\|_{\CalH_k}^2\leqslant\CalR\}$ where $\|\cdot\|_{\CalH_k}$ is the RKHS norm induced by the Gaussian kernel. According to \cite{kuhn2011covering}, the $u$-entropy of $\CalF_{n,m}$ can be bounded as follows:
\[H(u,\CalF_{n,m},\|\cdot\|_n)=\CalO(\frac{(\log \frac{1}{u})^{d+1}}{(\log \log  \frac{1}{u})^d}).\]
Direct calculations show that the $\delta_{n,m}$ in Theorem \ref{thm:LSE_sieve} and $\Delta_n$ in Theorem \ref{thm:risk_convergence} are as follows:
\begin{align*}
    &\delta_{n,m}=\CalO\left((nm)^{-\frac{1}{2}}[\log nm]^{\frac{d+1}{2}}\right),\quad 
    \Delta_n= \CalO\left(n^{-\frac{1}{2}}[\log n]^{\frac{d+1}{2}}\right).
\end{align*}
Then, we can use \citep[Lemma E.4]{ding2024random} to get  embedding constant $\alpha=1$ for the Gaussian RKHS (omit a potential polylog term):
\[\|f-\hat{f}\|_{L_\infty}\leqslant {\CalO}(\|f-\hat{f}\|_{L_2}),\quad f-\hat{f}\in\CalF_{n,m}.\]
Therefore, if the parameters $\beta=\gamma=1$, we can have the almost optimal convergence rate by setting $m=\CalO(1)$:
\[|\hat{f}_{(\lceil\frac{\tau}{n}\rceil)}-\text{VaR}_\tau(f(X))|\leqslant\CalO_{\pr}(\Gamma^{-\frac{1}{2}}[\log\Gamma]^{\frac{d+1}{2}}).\]
To lower the computational time complexity, we choose $\CalS_n=[\log n]^{\frac{d}{2}}$ inducing points $\tilde{\BFX}=\{\tilde{\BFx}_j\}$ from $\{\BFx_i\}$ such that the filled distance of $\tilde{\BFx}_j$ satisfy
\begin{equation}
    \label{eq:inducing_filled_dist}
    h_{\Omega,\tilde{\BFX}}=\sup_{\BFx\in\Omega}\inf_{j\in[\CalS_n]}\|\tilde{\BFx}_j-\BFx\|=\CalO(\CalS_n^{-\frac{1}{d}})=\CalO([\log n]^{-\frac{1}{2}}).
\end{equation}
 Let $\CalF_{n,m}=\text{span}\{k(\tilde{\BFx}_j,\cdot)\}$ where $k$ is the Gaussian kernel. According to \cite[Theorem 11.22]{wendland2004scattered}, 
\[\inf_{h\in\CalF_{n,m}}\|h-f\|_n\leqslant\CalO(\exp[-1/h_{\Omega,\tilde{\BFX}}])=\CalO(n^{-\frac{1}{2}}).\]
Because $\CalF_{n,m}$ is a finite dimensional space, we can use \cite[Corollary 2.5]{geer2000empirical} to derive that
\[H(u,\CalF_n(\delta),\|\cdot\|_n)=[\log n]^{\frac{d}{2}}\log(\frac{4\delta+u}{u}).\]
Similarly, it can be shown that $\delta_{n,m}=\CalO(n^{-\frac{1}{2}}[\log n]^{\frac{d+1}{2}}$). By setting $n=\CalO(\Gamma)$ and $m=\CalO(1)$,  we have the same convergence rate $|\hat \theta_{n,m}-\theta| =\CalO_{\pr}(\Gamma^{-\frac{1}{2}}[\log \Gamma]^{\frac{d+1}{2}})$. Compared with the $\CalO(n^3)$ computational time complexity of KRR, the LSE on sieve only requires $\CalO(n\CalS_n^2)=\CalO(n[\log n]^{d+1})$ time to achieve the same convergence rate.
\begin{remark}
     Currently, no algorithm with theoretical guarantees exists for selecting an inducing point set that satisfies \eqref{eq:inducing_filled_dist}. However, as shown in \cite{zhang2021distance}, random designs exhibit properties similar to space-filling designs, effectively meeting the necessary conditions. Furthermore, \cite{brauchart2018random} proves that the expected filled distance of randomly selected points on a $d$-sphere satisfies a slightly weaker condition (with an extra log term), \( \mathit{E}[h_{\Omega, \tilde{\mathbf{X}}}] = \mathcal{O}((\log \mathcal{S}_n / \mathcal{S}_n)^{1/d}) \). Thus, both empirical and theoretical evidence suggest that randomly selecting inducing points generally suffices to meet these conditions in most regular domains.
\end{remark}

\subsection{Neural networks}\label{sec: ReLU}
We now apply our theories to obtain bounds for a specific type of neural networks called feed-forward multi-layer networks with ReLU activation. Here, we consider H\"older-$s$  underlying $f$ with smoothness parameter $s> d/2$, meaning that there exists some constant $M$ such that
\[\|f\|_{C^{s}(\Omega)}=\max_{|\BFv|<\lfloor s\rfloor}\|D^{\BFv}f\|_{L_\infty}+\max_{|\BFv|=\lfloor s\rfloor}\sup_{\BFx,\BFx'\in\Omega}\frac{|D^{\BFv}f(\BFx)-D^{\BFv}f(\BFx')|}{\|\BFx-\BFx'\|^{s-\lfloor s\rfloor}}\leqslant L,\]
where $\BFv=(v_1,\cdots,v_d)\in\NatInt^d$ consists of $d$ integers and 
\[D^{\BFv}f=\frac{\partial^{|\BFv|}f}{\partial x_1^{v_1}\cdots\partial x_d^{v_d}}.\]
Denote $A(x)=\max(x,0)$ as the ReLU activation function. An $H$-layer ReLU net $\Phi(H,W,S,B)$ is a parametrized function space consisting of functions of the form:
    $$\Phi(H,W,S,B)=\{f(\BFx)=[\BFW^{(H)}A(\cdot)+\BFb^{(H)}]\circ\cdots\circ[\BFW^{(1)}\BFx+\BFb^{(1)}]\}.$$
Here, $A(\cdot)$ is entry-wise positive part operation, $\BFW^{(1)}\in\Real^{W\times d}$,   $\BFW^{(h)}\in\Real^{W\times W}$ and $\BFb^{(h)}\in\Real^{W}$ for $h\in[H]$ and they satisfy
\begin{align*}
    &\sum_{h=1}^H\|\BFW^{(h)}\|_0+\|\BFb^{(h)}\|_0\leqslant S,\quad \max\{\max_{i,j,h}|\BFW^{(h)}_{i,j}|,\max_{i,h}|\BFb^{(h)}_i|\}\leqslant B,\\
\end{align*}
where $\|\BFM\|_{0}$ counts the number of non-zero elements of matrix $\BFM$. Intuitively, $H$ is the number of hidden layers, $W$ is the maximum size of weight matrices, $S$ is the sparsity, and $B$ is the maximum values allowed for the outputs of all activations. In summary, the target space $\CalF$ consists of all H\"older-$s$ functions
and the sieve $\CalF_{n,m}$ is a ReLU neural network:
\[\CalF=\{f: \|f\|_{\mathcal{C}^s(\Omega)}\leqslant L\},\quad \CalF_{n,m}=\Phi(H,W,S,B).\]
According to \cite{gouk2021regularisation}, $\Phi(H,W,S,B)$ is a subset of H\"older-1 space. Because it is also a parametric class, we can use the same reasoning in \cite[Example 19.7]{van1998asymptotic} to show that $\Phi(H,W,S,B)$ is Donsker. Moreover, we  apply  Gagliardo–Nirenberg interpolation inequality on H\"older-1 class, which leads to the following inequality for any function $f\in\CalF_{n,m}$:
\begin{equation}\label{eq:GNS_ineql}
    \|f\|_{L_\infty}\leqslant C_1\|f\|_{L_2}^{\frac{2}{d+2}}\|D^{\BFv}f\|_{L_\infty}^{\frac{d}{d+2}}\leqslant C_2\|f\|_{\mathcal{C}^1},
\end{equation}
where $C_1$ and $C_2$ are some universal constants. In summary,  $\Phi(H,W,S,B)$ satisfies Assumptions~\ref{assump:b_bounded} and \ref{assump:L_infinity_embedding}. For target function space $\CalF$, it is well known that H\"older-$s$ space can be embedded on Sobolev space $\mathscr{H}^s(\Omega)$ for compact $\Omega$. Therefore, $\CalF=\mathcal{C}^s(\Omega)$ also satisfies Assumptions~\ref{assump:b_bounded} and \ref{assump:L_infinity_embedding} for $s>d/2$.

By \citep[Lemma 3]{suzuki2018adaptivity}, the $u$-entropy of $\CalF_{n,m}$ can be bounded as follows:
\begin{equation*}
    H(u,\Phi(H,W,S,B),\|\cdot\|_n)\leqslant 2SH\log [(B\vee 1)(W+1)] +S\log\frac{H}{u}.
\end{equation*}
Therefore, it can be shown that the $\delta_{n,m}$ in Theorem \ref{thm:LSE_sieve} and the $\Delta_n$ in Theorem \ref{thm:risk_convergence} can be derived by
\begin{align*}
&\delta_{n,m}=\inf\{0<{\delta}<\frac{2^7\sigma}{\sqrt{m}}: \sqrt{nm}\delta^2\geqslant c\left( \sqrt{S}\int ^{\delta}_{\frac{\delta^2\sqrt{m}}{2^7\sigma}}\sqrt{\log\frac{H}{u} }\mathrm{d}u\vee \delta\right)\},\\
    &\Delta_n=\inf\left\{\delta: \frac{\delta^2}{b}\geqslant \inf_{\varepsilon\geqslant 0}\left\{4\varepsilon+\frac{C\sqrt{S}}{\sqrt{n}}\int_\varepsilon^{\delta} \sqrt{\log\frac{H}{u} }\mathrm{d}u\right\}\right\},
\end{align*}
respectively. From direct calculations, we can check that
\begin{equation}\label{eq:stat_err_relu}
    \delta_{n,m}=\CalO\left(\sqrt{\frac{S}{mn}\log[\frac{mnH^2}{S}]}\right),\quad \Delta_n=\CalO\left(\sqrt{\frac{b^2S}{n}}|\log[\sqrt{\frac{nH^2}{b^2S}}]|\right).
\end{equation}

For the approximation error of $\Phi(H,W,S,B)$, we have the following Proposition, which is adapted from \cite[Proposition 1]{suzuki2018adaptivity}

\begin{proposition}\label{prop:ReLU_approx}
  Let $C_0$ and $W_0$ be some constants that depend only on dimension $d$ and smoothness $s$ with $s\geqslant 1$. Let
\begin{align*}
    &H=3+2\lceil\log_2\frac{3^{d\vee\lfloor s+2\rfloor}}{\delta C_0}+5\rceil\lceil\log_2 d\vee\lfloor s+2\rfloor\rceil,\\
    &W=W_0 N,\quad 
    S=[(H-1)W_0^2+1]N,\quad B=N^{\frac{1}{d}},
\end{align*}  
where $N=\lceil |\delta\log \delta|^{-\frac{d}{s}}\rceil $. Then it holds that 
\[\sup_{f\in\CalF}\inf_{f^*\in\CalF_{n,m}}\|f-f^*\|_{n}\leqslant \sup_{f\in\CalF}\inf_{f^*\in\CalF_{n,m}}\|f-f^*\|_{L_\infty}=\CalO(\delta).\]
\end{proposition}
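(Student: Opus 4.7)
The statement is explicitly adapted from \cite[Proposition~1]{suzuki2018adaptivity}, so the plan is to leverage that construction rather than rebuild it from scratch. The only additional work is to verify that the architecture parameters $(H,W,S,B)$ listed here correspond to taking scale parameter $N=\lceil|\delta\log\delta|^{-d/s}\rceil$ in Suzuki's construction, and to translate the $L_\infty$ guarantee there into the empirical-norm bound stated here.

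First I would recall the classical fact that every $f\in\CalF$ (H\"older-$s$ with $s\geq 1$ on a bounded convex domain) admits a piecewise-polynomial approximant of degree $\lfloor s\rfloor$ on a uniform grid of $N$ cells with global $L_\infty$ error $\CalO(N^{-s/d})$, via local Taylor expansions and a smooth partition of unity. With the chosen $N=\lceil|\delta\log\delta|^{-d/s}\rceil$, this gives an error of order $\delta$ up to the $\log$ factor absorbed inside the ceiling of $N$.

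Next I would realize this piecewise-polynomial approximant by a sparse deep ReLU network. The two building blocks, both due to Yarotsky, are: (i) an approximate multiplication gadget $(x,y)\mapsto xy$ implementable to precision $\delta$ by a ReLU subnet of depth $\CalO(\log(1/\delta))$ and constant width; and (ii) a cell-selection bump implementable with $\CalO(d)$ ReLUs per cell. Composing (i) to form all monomials of degree $\leq\lfloor s\rfloor$ and combining these in parallel with (ii) across the $N$ cells produces a network whose depth matches the stated $H$, whose maximum width is $W_0 N$ with $W_0$ depending only on $d,s$, whose total nonzero parameter count is $[(H-1)W_0^2+1]N=S$, and whose coefficients can be kept bounded by $N^{1/d}=B$ (the last bound being forced by the derivatives of degree-$\lfloor s\rfloor$ monomials evaluated at the cell scale $N^{-1/d}$). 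These are exactly the parameters in the statement, yielding $f^*\in\Phi(H,W,S,B)=\CalF_{n,m}$ with $\|f-f^*\|_{L_\infty}=\CalO(\delta)$.

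Finally, since $\|h\|_n=\bigl(\tfrac{1}{n}\sum_{i=1}^n h(\BFx_i)^2\bigr)^{1/2}\leq \|h\|_{L_\infty}$ for every $h$, the empirical-norm inequality in the statement is an immediate consequence of the uniform bound, and taking $\sup_{f\in\CalF}\inf_{f^*\in\CalF_{n,m}}$ finishes the proof. The genuinely delicate step, were one to redo this from first principles rather than invoke Suzuki's construction, is the simultaneous control of sparsity $S$ and coefficient bound $B$ under composition of the multiplication gadgets; we treat that bookkeeping as a black box.
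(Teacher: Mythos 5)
Your proposal takes essentially the same route as the paper: the paper gives no standalone proof of this proposition and simply defers to \cite[Proposition 1]{suzuki2018adaptivity}, which is exactly the construction you invoke, and your added connective tissue (the Yarotsky-type realization of local Taylor polynomials and the elementary bound $\|h\|_n\leq\|h\|_{L_\infty}$) is correct. The only point worth double-checking is the bookkeeping of the logarithmic factor in $N=\lceil|\delta\log\delta|^{-d/s}\rceil$ versus the claimed $\CalO(\delta)$ error, but that issue is inherited from the paper's statement rather than introduced by your argument.
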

From Proposition \ref{prop:ReLU_approx}, it is clear that $H,W,S,B$ can all be treated as functions of the approximation error $\delta$. We now can apply Theorem \ref{thm:LSE_sieve} to get the convergence rate. We first let $m=\CalO(1)$ and 
$\delta=n^{-\frac{s}{2s+d}}\log n$ 
to get the associate $H,W,S,B$ accordingly. Then we substitute $H,W,S,B$ into \eqref{eq:stat_err_relu} to get 
\[\delta_{n,m}=\CalO(n^{-\frac{s}{2s+d}}\log n)=\Delta_n.\]
Therefore, the empirical error between $\hat{f}$ and $f$ is $\CalO_{\pr}(n^{-\frac{s}{2s+1}}\log n).$ However, the non-linearity term $\hat{\CalL}$ for neural net $\Phi(H,W,S,B)$ cannot be zero. This is because,  in contrast to the previous KRR examples, the set $\Phi(H,W,S,B)$ is not convex. Consequently, during the training process, there can be multiple resulting estimators ${\hat{f}_j}$ within $\mathcal{F}_{n,m}$ that closely approximate  $f$ on the dataset $\{\BFx_i\}$ but may deviate significantly from  $f$ outside of the dataset. However, the discussion of $\hat{\CalL}$ is beyond the scope of this study. We make the assumption that the training algorithm performs effectively, resulting in $\hat{\CalL}=\CalO(n^{-\frac{1}{2}})$. In this case, the nested expectation can achieve the optimal rate for $s>d/2$:
\[|\hat \theta_{n,m}-\theta|=|\frac{1}{n}
\sum_{i=1}^n\hat{f}\circ\eta(\BFx_i)-\mathit{E}[\eta\circ f(X)]| = O_{\pr}\left(n^{-\frac{1}{2}}\right)=\CalO_{\pr}(\Gamma^{-\frac{1}{2}}).\]

For the rate under risk measure, we use \eqref{eq:GNS_ineql} to get $\alpha=d/(d+2)$. Substitute $\alpha$, $\delta_{n,m}$, and $\Delta_n$ into Theorem \ref{thm:risk_convergence}, we have
\[|\hat{f}_{(\lceil\frac{\tau}{n}\rceil)}-\text{VaR}_\tau(f(X))| \leqslant\CalO(\Gamma^{-\frac{s}{2s+d}\frac{d}{d+2}\frac{\beta}{\gamma}}[\log \Gamma]^{-\frac{d}{d+2}\frac{\beta}{\gamma}}+n^{-\frac{1}{2\gamma}}).\]

\section{Numerical Experiments}\label{sec:numerical}
We now conduct numerical comparisons between the KRR estimator and the  LSE's on sieve, KRR with inducing points and ReLU neural network, as we introduced in Section \ref{sec:example}. In the first subsection, we conduct our numerical comparisons on synthetic data. Practical examples arising from portfolio risk management are discussed in the second subsection. All of the experiments are implemented in Matlab R2023a on a MacBook with Apple M2 Max Chip and 32GB of RAM.

\subsection{Synthetic Data}
 The first LSE utilizes inducing points $\{\tilde{\BFx}_i\}$ to construct sieve $\mathcal{F}_{n,m} = \text{span}\{k(\tilde{\BFx}_i,\cdot)\}$, while the second one employs sieve $\mathcal{F}_{n,m}$ as a three-layer ReLU neural network $\Phi$ with 256 activations in the first hidden layer and 128 activations in the second hidden layer. Furthermore, it's worth noting that throughout our experiments, we have set $m=1$ because generating duplicate samples at the same realization $\BFx_i$ does not lead to an improvement in the convergence rate, as we have demonstrated in Section \ref{sec:asymptotic}. The network is trained with the ADMM algorithm \citep{kingma2014adam} provided by the MATLAB deep learning toolbox \textit{adamupdate} which is the default option in MATLAB. All training was performed using MATLAB’s default options. 

 \begin{remark}
      We use the three-layer neural network because this architecture is based on its proven effectiveness in prior works, such as the original GAN paper \cite{goodfellow2014generative}, where a similar structure successfully generated complex structures. To prevent overfitting, the training process of the neural network in MATLAB adopts the early-stopping by dividing data into training set, validation set, and testing set. Specifically, the training process early stops when the performance on the validation set and the test set reaches a minimum. Early stop can be treated as a type of regularization, please refer to \cite{yao2007early,ding2024random} for details.
 \end{remark}

Let $f(\BFx) = \mathit{E}[Y|X=\BFx]$ be the conditional expectation in the inner level. 
We assume
\begin{equation} \label{eq:test-fun}
f(\BFx) = \frac{1}{N}\sum_{i=1}^N c_i k(\BFx, {\BFU}_i),\quad \BFx\in\Omega,
\end{equation}
where 
$N= 1\,000$, \textcolor{black}{$\Omega = [0,1]^d$}, and
$k$ is set  as the Laplace kernel $k_{\CalL}$ or the Gaussian kernel $k_{\CalG}$  $$k_{\CalL}(\BFx,\BFx')=e^{-\frac{\|\BFx-\BFx'\|}{d}},\quad k_{\CalG}(\BFx,\BFx')=e^{-\frac{\|\BFx-\BFx'\|^2}{d}}.$$
Both $c_i$ and $\BFU_i$ are randomly generated---with the former from \textcolor{black}{$\mathsf{Normal}(0, 1)$}  (i.e., the standard normal distribution) and the latter from  \textcolor{black}{$\mathsf{Uniform}[0,1]^d$}  (i.e., \textcolor{black}{the uniform distribution on \textcolor{black}{$[0,1]^d$}})---and then fixed. 
According to the definition of RKHSs, $f$ must be a function in the RKHS generated by the kernel $k$ because
\[\|f\|_{\CalH_k}^2=\frac{1}{N^2}\sum_{i,j=1}^Nc_ic_jk(\BFU_i,\BFU_j)<\infty.\]

All experiments can be separated into four classes as shown in the Table \ref{tab:experiment_classes}, so each experiment can be labeled by the vector $(\CalT,d)$.\\

\begin{table}[htbp]
    \centering
    {\small
    \begin{tabular}{@{\extracolsep{10pt}} c @{\extracolsep{10pt}} c @{\extracolsep{10pt}} c}
    \toprule
     & 10 dimension & 50 dimension \\
    \midrule
    $\CalT(Z)=Z^2$ & $k_{\CalL}$, $1\,000-5\,000$ data & $k_{\CalL}$, $4\,000-20\,000$ data \\ 
    $\CalT(Z)=\text{VaR}(Z)$ & $k_{\CalG}$, $1\,000-5\,000$ data & $k_{\CalG}$, $4\,000-20\,000$ data \\
    \bottomrule
    \end{tabular}
    }
    \caption{Classification of experiments by dimension and type.}
    \label{tab:experiment_classes}
\end{table}

We let outer-level scenario $X$ be uniformly distributed on $[0,1]^d$. Given a realization $\BFx_i$ of $X$, 
the inner-level samples are simulated via $y_{ij} = f(\BFx_i) + \varepsilon_{ij}$, where $\varepsilon_{ij}$'s are i.i.d. \textcolor{black}{$\mathsf{Normal}(0, 1)$} random variables. Once an estimator $\hat{\theta}=\CalT\circ \hat{f} (X)$ of $\theta=\CalT\circ f(X)$ is constructed, we sample 10\,000 points for $X$ to estimate the absolute value $|\hat{\theta}-\theta|$ for comparison.

For experiments in dimension $d=10$, data sizes of $n=1\,000, 2\,000, \cdots, 5\,000$ are used. In the case of dimension $d=50$, data sizes of $n=4\,000,\cdots,20\,000$ are used. Each experiment is repeated 1\,000 times and we  report the averaged absolute error, its standard deviation 
(STD) when $\CalT(Z)=Z^2$, and one-tenth of its STD when $\CalT(Z)=\text{VaR}(Z)$  as our result. We chose to display one-tenth of the STD associated with the ReLU neural network due to its relatively large size. Displaying its original STD would obscure other results, making the chart difficult to interpret.

For experiments with functional $\CalT(Z)=Z^2$,  the underlying function $f$ in \eqref{eq:test-fun} is a linear combination of Laplace kernel $k_\CalL$. In this case, the RKHS generated by $k_\CalL$ is  the $(\frac{1}{2}+\frac{d}{2})$-order Sobolev space with $d=10$ or $50$. This aligns with the case where the smoothness parameter $\nu = \frac{1}{2} < \frac{d}{2}$, and consequently, KRR with inducing points and ReLU neural network both exhibit a faster convergence rate (given that the training algorithm of neural net performs well enough), as discussed in Sections \ref{sec:KRR_nestedE} and \ref{sec: ReLU}. The KRR with inducing point has inducing point set randomly selected without replacement from data $\{\BFx_i\}$ with size $\CalS_n=n^{\frac{1}{2}}$. The ReLU neural net is introduced at the beginning of this section. As demonstrated in Figure \ref{fig:result} (a) and (d), KRR with inducing points consistently outperforms its competitors. The ReLU network performs similarly to KRR when the dimension is $d=10$ and surpasses KRR when $d=50$. According to our results in Section \ref{sec:example}, both KRR with inducing points and the ReLU neural network exhibit a faster convergence rate than KRR in this case. For KRR with inducing points, our experiments successfully validate this assertion. However, for the ReLU network, we believe that its suboptimal performance is due to its high $u$-entropy when dealing with small datasets $n=1\,000,\cdots,5\,000$. When the data size is sufficiently large, i.e. $n=4\,000,\cdots,20\,000$, the ReLU network outperforms KRR and shows similar performance to KRR with inducing points.

For experiments with functional $\CalT(Z)=\text{VaR}(Z)$,  the underlying function $f$ in \eqref{eq:test-fun} is a linear combination of Gaussian kernels $k_\mathcal{G}$. In this case, the underlying function $f$ exhibits smoothness. As discussed in Section \ref{sec:example}, both KRR and KRR with inducing points can achieve the optimal convergence rate of $\mathcal{O}(\Gamma^{-\frac{1}{2}})$. In our experiments, KRR with inducing points selects the inducing point set randomly without replacement from the data ${\BFx_i}$, with a size of $\mathcal{S}_n = |\log n|^3$. From Figure \ref{fig:result} (b) and (e), it is evident that KRR with inducing points consistently outperforms KRR. We attribute this to the fact that KRR with inducing points requires significantly lower computational resources due to $\mathcal{S}_n = |\log n|^3$, resulting in much lower numerical error compared to KRR. As for the ReLU neural network, it is not a smooth estimator, and thus, it can never attain the optimal convergence rate, regardless of the smoothness of the underlying function $f$ as discussed in Section \ref{sec: ReLU}. Consequently, it consistently exhibits the poorest performance among all estimators. Table \ref{tab:log_log} presents the average slope of the logarithmic error versus data size for data sizes between  \( n=4\,000 \) and  \( n=5\,000 \). This slope approximates the exponent of the convergence rate. Our results indicate that KRR with inducing points achieves a convergence rate of approximately \( \mathcal{O}(n^{-1/2}) \) across all scenarios, aligning with our theoretical statement.
\begin{figure}[!t]
\centering
\subfloat[ ]{\includegraphics[width=0.45\textwidth, height=0.23\textheight]{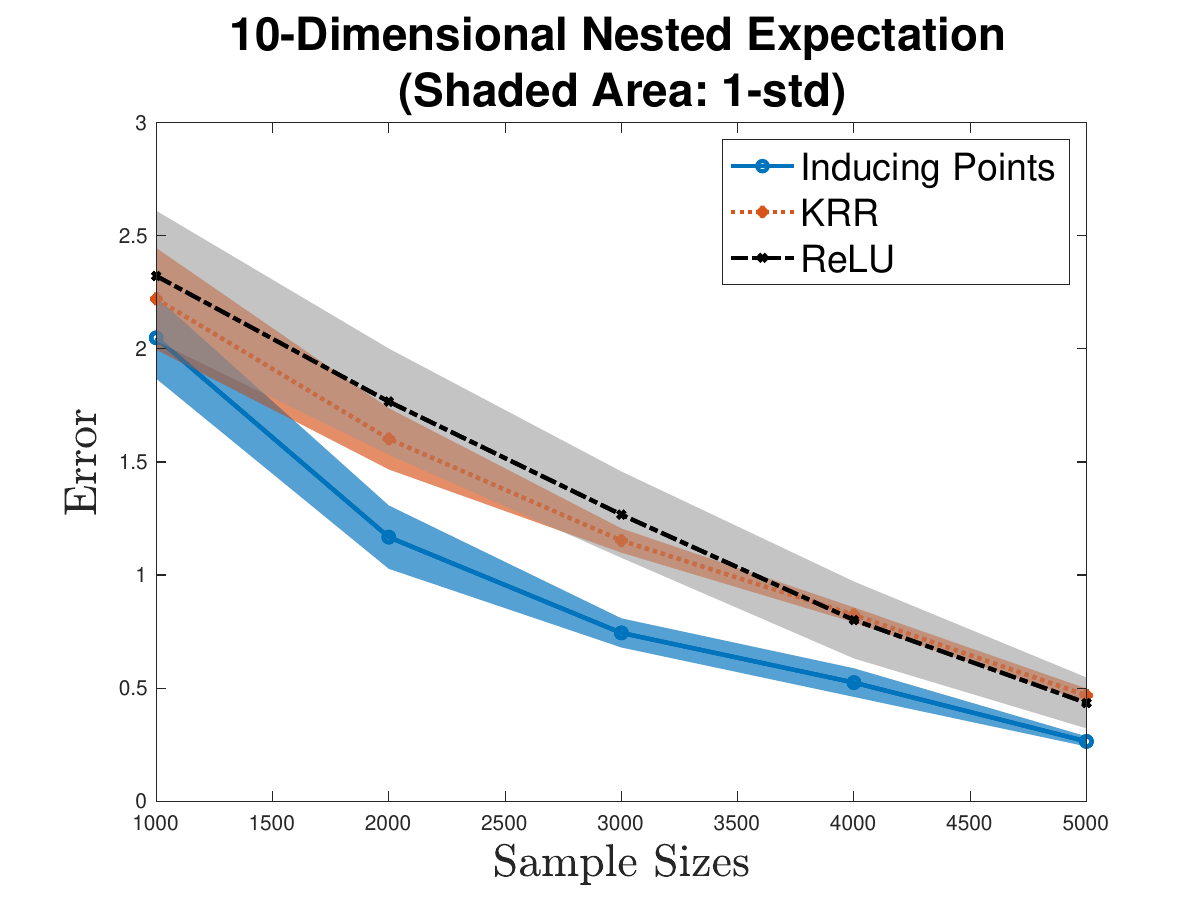}}
\subfloat[]{\includegraphics[width=0.45\textwidth, height=0.23\textheight]{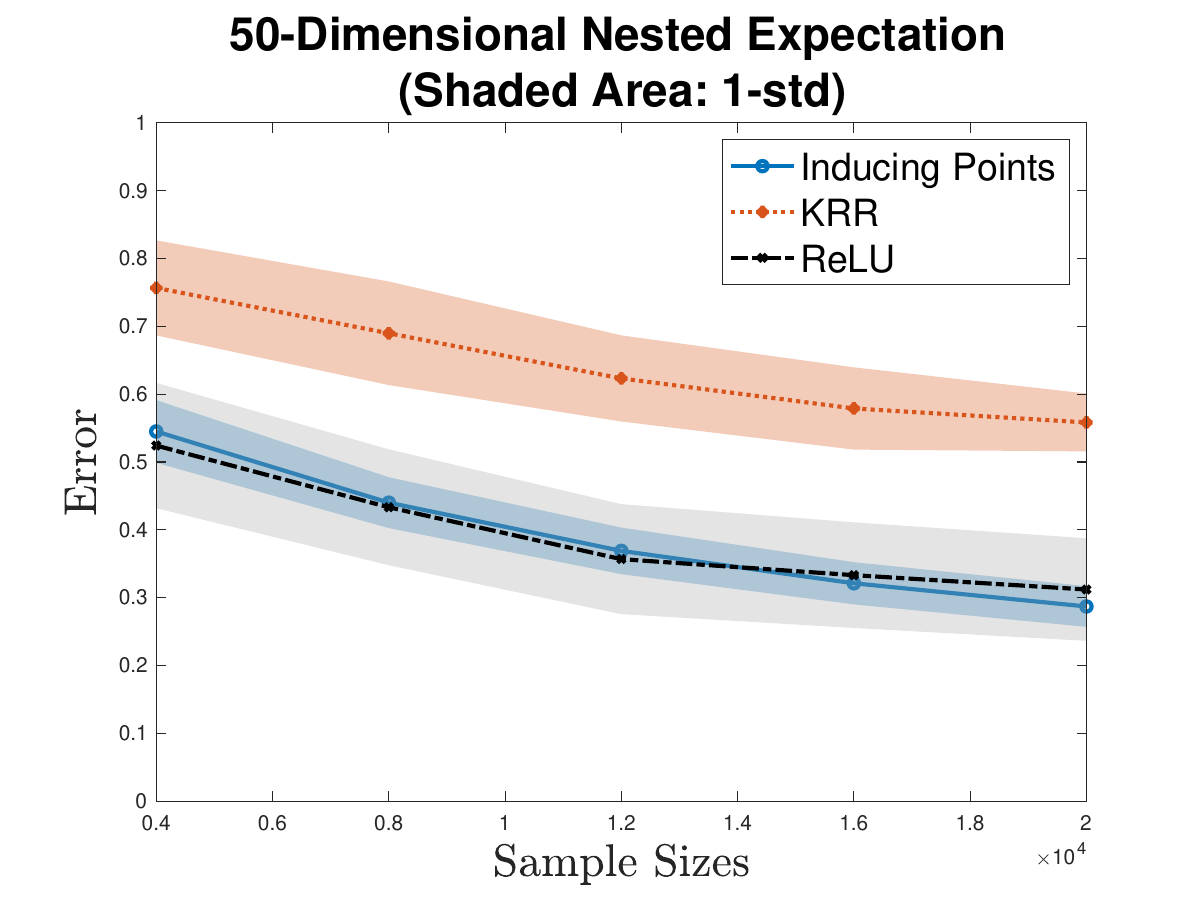}}\\
\subfloat[]{\includegraphics[width=0.45\textwidth, height=0.23\textheight]{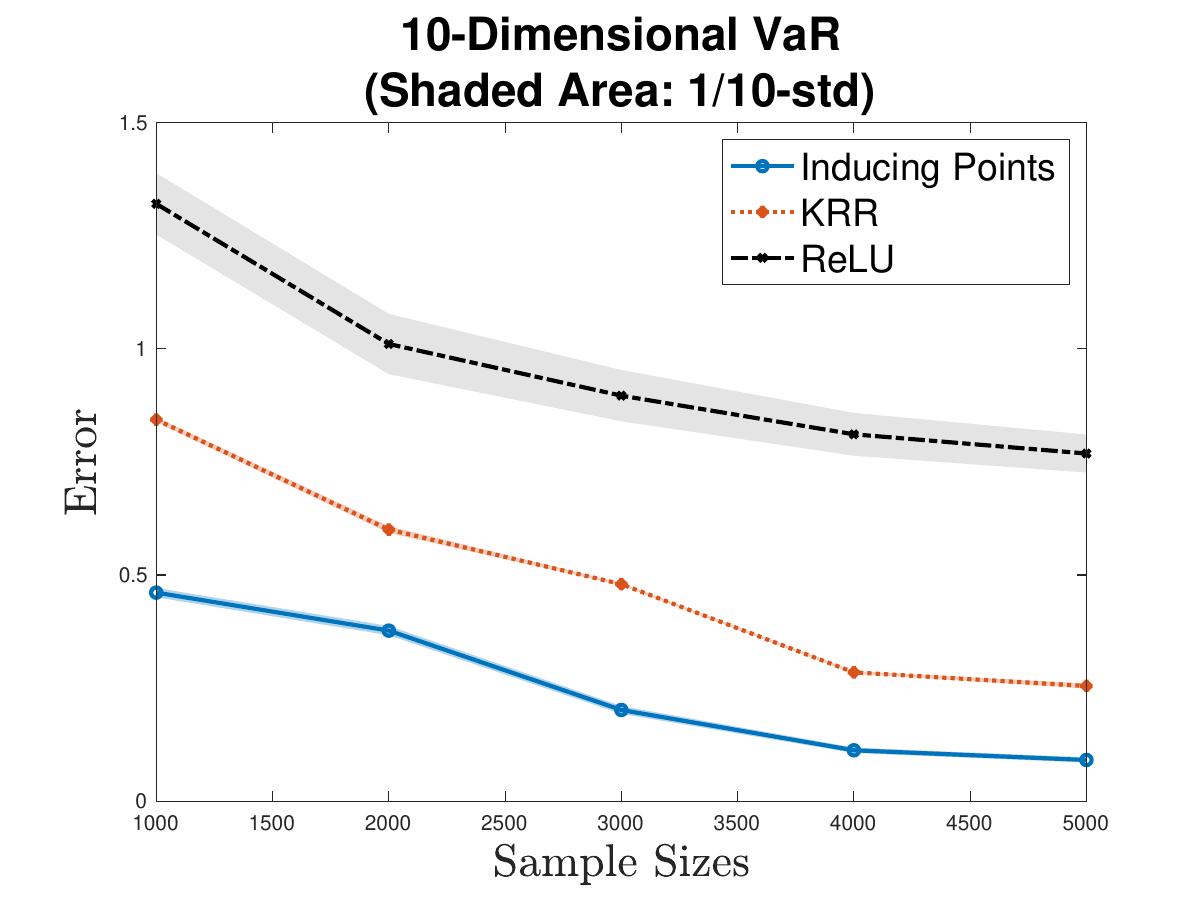}}
\subfloat[]{\includegraphics[width=0.45\textwidth, height=0.23\textheight]{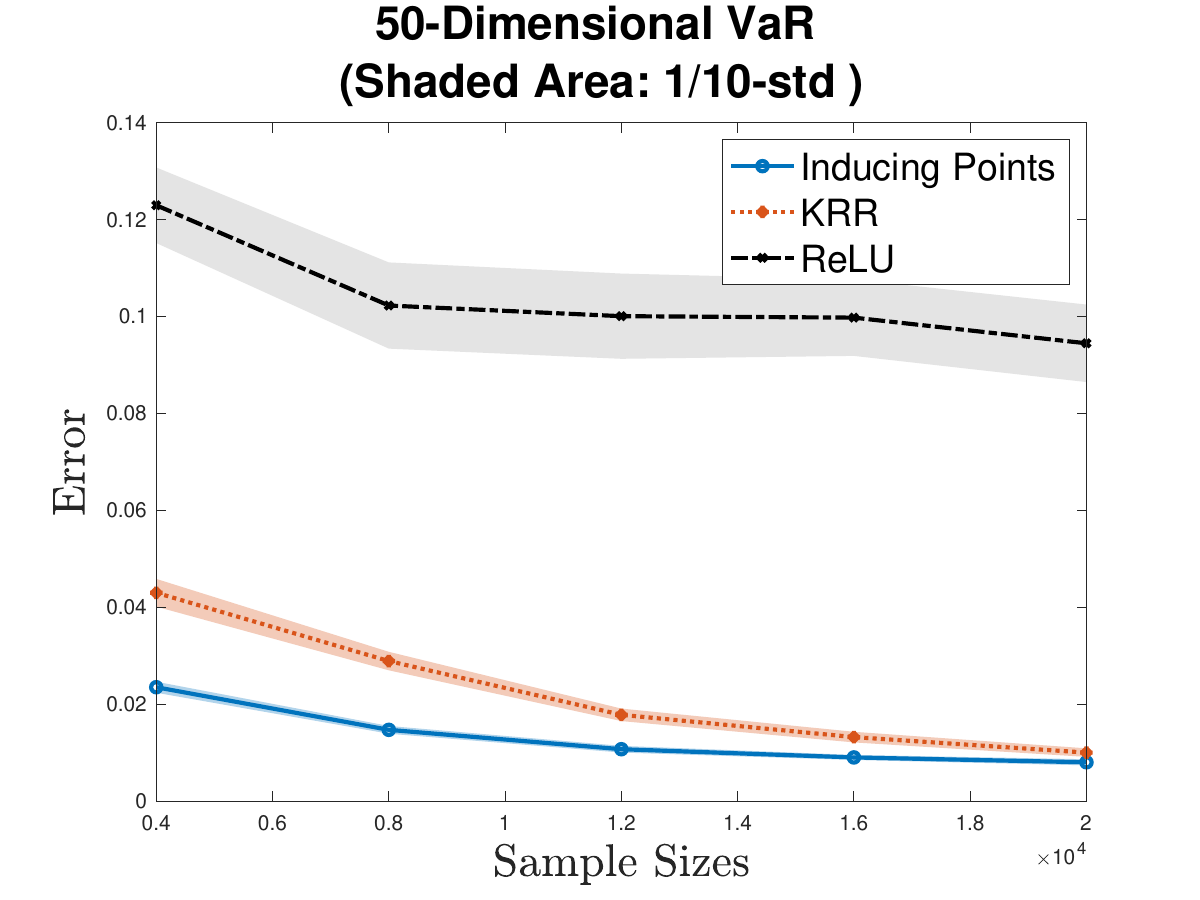}}\\
\subfloat[]{\includegraphics[width=0.45\textwidth, height=0.23\textheight]{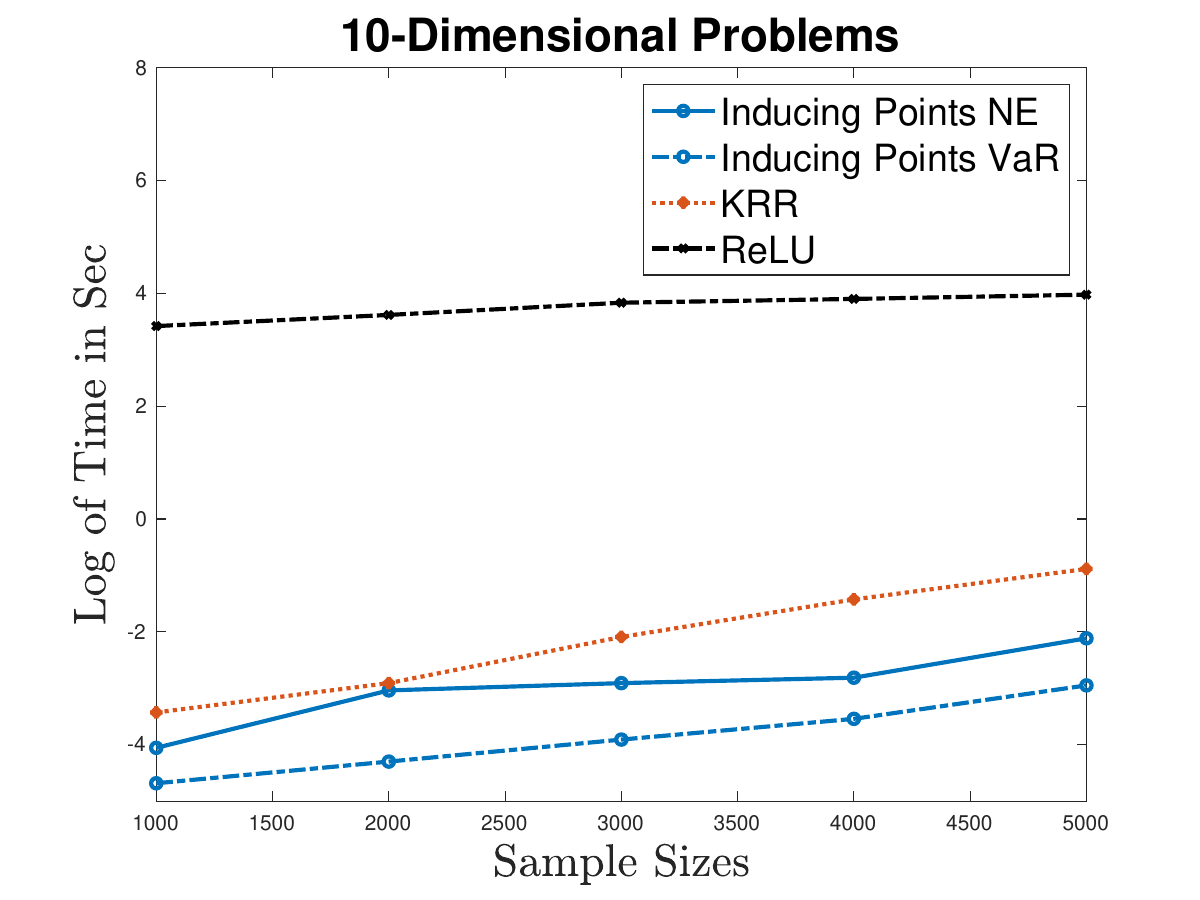}}
\subfloat[]{\includegraphics[width=0.45\textwidth, height=0.23\textheight]{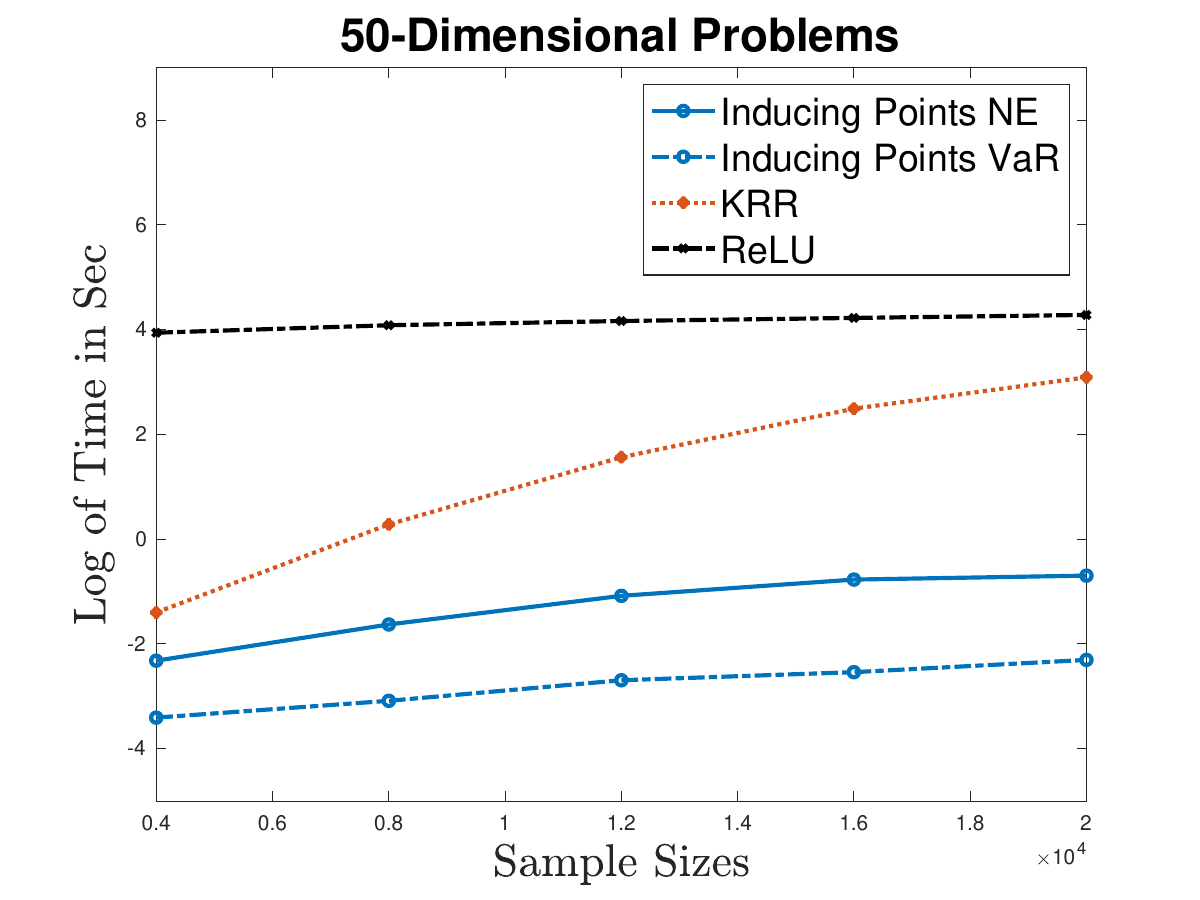}}\\

\caption{Upper row: MSE error of experiments associated to $\CalT(Z)=Z^2$; middle:  MSE errors of experiments  associated to  $\CalT(Z)=\text{VaR}(Z)$; bottom row: Computational time of experiments. The lines represent the averaged errors and the shaded area represents the STD for nested expectation and one-tenth of the STD for VaR.\label{fig:result}}
\end{figure}

\begin{table}[htbp]
    \centering
    {\small
    \begin{tabular}{@{\extracolsep{10pt}} l @{\extracolsep{10pt}} l @{\extracolsep{10pt}} r r @{\extracolsep{10pt}} r r @{\extracolsep{10pt}} r r}
    \toprule
    & \multirow{2.5}{*}{$\frac{\log(|\hat{\theta} -\theta|)-\log(|\hat{\theta} -\theta|)}{\log(5000)-\log(4000)}$} & \multicolumn{3}{c}{Nested Expectation} & \multicolumn{3}{c}{VaR} \\
    \cmidrule{3-8}
    & & KRR & KIP & ReLU & KRR & KIP & ReLU \\
    \cmidrule{1-2} \cmidrule{3-5} \cmidrule{6-8}
     & 10-dimension & -.572 & -.581 & -.534  & -.500 & -.615 & -.241 \\
    & 50-dimension & -.161 & -.532 & -.297 & -.527 & -.492  & -.121 \\
    \bottomrule
    \end{tabular}
    }
    \caption{The average slope of the logarithmic error versus logarithmic data size for data sizes between  \( n=4\,000 \) and  \( n=5\,000 \) is calculated to illustrate the convergence rates of different methods.   \label{tab:log_log} }
\end{table}

We have also plotted the time required for training each estimator, as shown in Figure \ref{fig:result} (c) and (f). The training times for the KRR and ReLU neural networks are similar for both nested expectation and VaR. Therefore, we report the average training times for KRR and the neural network over all experiments with dimension $d=10$. Regarding KRR with inducing points, in the nested expectation case, Laplace kernel is used so there are $n^{\frac{1}{2}}$ inducing points, while in the VaR case, Gaussian kernel is used so there are $|\log n|^3$ inducing points. Therefore, we separately plot these different scenarios. The running times are plotted on a logarithmic scale. It's evident that Gaussian kernel KRR with inducing points has the shortest running time among all cases. This is because it only requires $|\log n|^3$ inducing points, making it a highly computationally efficient estimator.

\subsection{Portfolio Risk Management}

In this subsection, we explore simulations of a portfolio composed of options, each dependent on the performance of $q$ underlying assets, with a shared expiration date at $T$. The portfolio manager aims to evaluate the risk at a future point in time, $T_0 < T$, using five distinct metrics: the expected quadratic loss $\mathit{E}[Z^2]$, the expected excess loss $\mathit{E}[(Z-z_0)^+]$, and the VaR of $Z$ at a specific risk level $\tau$. Here, $Z$ represents the portfolio loss at time $T_0$, and $z_0$ denotes a predetermined threshold. These three metrics are formalized through the transformation $\CalT(\cdot) = \mathit{E}[\eta(\cdot)]$, where $\eta(z) = z^2$, $\eta(z) = (z - z_0)^+$, and $\eta(z) = \inf\{z \in \Real : \Pr(Z \leqslant z) \geqslant \tau\}$, respectively.

Let $\BFS(t) = [S_1(t), \cdots, S_q(t)]$ represent the vector of asset prices at time $t$. Within the nested simulation framework, we first simulate $\BFS(t)$ up to time $T_0$, with these sample paths forming the scenarios at the outer level. Subsequently, for the inner-level simulation, we project $\BFS(t)$ from $T_0$ to $T$ to approximate the values of the options within the portfolio. An important nuance is the distinction in probability measures applied at the outer and inner simulation levels; the outer-level simulation employs the real-world measure, while the inner-level adopts a risk-neutral measure. Assume $\BFS(t)$ adheres to a $q$-dimensional geometric Brownian motion (GBM) model:
\[\frac{dS_i(t)}{S_i(t)} = \mu_i dt + \sum_{j=1}^q \sigma_{ij} \mathrm{d}B_j(t), \quad i = 1, \cdots, q,\]
where $B_i(t)$ are independent standard one-dimensional Brownian motions, the matrix $[\sigma_{ij}]_{i,j}$ is lower-triangular (i.e., $\sigma_{ij} = 0$ for all $i > j$), and $\mu_i$ varies based on the probability measure in use. For simplicity, we assume a uniform return $\mu$ for each underlying asset under the real-world measure (i.e., $\mu_i = \mu$ for all $i$). Conversely, under the risk-neutral measure, $\mu_i$ should align with the risk-free interest rate $r$ (i.e., $\mu_i = r$ for all $i$).

The portfolio comprises six options for each underlying asset, totaling 6q options. Included within these are three geometric Asian call options with discrete monitoring and three up-and-out barrier call options with continuous monitoring, which can be knocked out anytime from $0$ to $T$. Both option types feature path-dependent payoffs. For an asset with price $S(t)$, the payoff for a geometric Asian call option, monitored at times $0 = t_0 < t_1 < \cdots < t_M = T$, is defined as $((\prod_{k=1}^M S(t_k))^{1/M} - K)^+$, where $K$ is the strike price. The payoff for an up-and-out barrier option with barrier $H$ is $(S(T) - K)^+ \rmI\{\max_{0 \leqslant t \leqslant T} S(t) \leqslant H\}$. The strike prices for the three Asian options on each asset are $K_1$, $K_2$, and $K_3$, which similarly apply to the three barrier options; these barrier options also share the same barrier level $H$. The risk horizon $T_0 = t_{M_0}$ is defined for some $M_0 = 1, \cdots, M$. According to derivative pricing theory \citep[Chapter 1]{Glasserman03}, the value of each option at $T_0$ is its expected discounted payoff, leading to the portfolio's value at $T_0$ being

\begin{align*}
    V_{T_0}(\BFX) = & \mathit{E} \left[e^{-r(T-T_0)}\sum_{i=1}^q\sum_{l=1}^3\left(((\prod_{k=1}^M S_i(t_k))^{1/M}-K_l)^+ \right. \right. \\
    & \left. \left. + (S_i(T)-K_l)^+\rmI\{\max_{0\leqslant t \leqslant T} S(t) \leqslant H\}\right) \bigg| \BFX\right],
\end{align*}

where $\BFX \in \Real^{3q}$ is a vector representing the risk factors, defined as

\begin{align*}
    \BFX = & \left[S_1(T_0), \cdots, S_q(T_0), (\prod_{k=1}^M S_1(t_k))^{1/M}, \cdots, (\prod_{k=1}^M S_q(t_k))^{1/M}, \right. \\
     & \quad \left. \max_{0 \leqslant t \leqslant T_0} S_1(t), \cdots, \max_{0 \leqslant t \leqslant T_0} S_q(t)\right].
\end{align*}
Thus, the portfolio's loss at time $T_0$ is $Z = V_0 - V_{T_0}(\BFX)$, where $V_0$ is the portfolio's initial value at time $0$, calculated similarly to $V_{T_0}$ but with $T_0 = 0$. This loss can also be represented as $Z = \mathit{E}[Y|\BFX]$, where $Y = V_0 - V(\BFX)$.

To assess the performance of a nested simulation method, we need to compute the true value of $\Theta=\CalT(Z)$, which is done as follows. Under the assumption that $\BFS(t)$ follows a GBM, $V_{T_0}(\BFX)$ can be calculated in closed form (see e.g., Chapter 4 in \cite{haug2007complete}). We generate $10^8$ i.i.d. copies of $\BFX$ and calculate the corresponding conditional expectation  $V_0-V_{T_0}(\BFX)$, which are i.i.d. copies of $Z$ and can be used to accurately estimate $\theta$.

We assess various methods using the relative root mean squared error (RRMSE), defined as the RMSE ratio over an accurate estimate of $\theta$, along with the sample standard deviation (STD). For each problem instance, RRMSE and sample variance estimates are derived from 1\,000 macro-replications. The RRMSE reflects the accuracy of an estimator while the sample STD reflects its stability. 

The additional parameters are defined as follows: the maturity date for each option in the portfolio is $T = 1$, with a risk horizon at $T_0 = 3/50$. Within the GBM framework, the initial price for each asset is set at $S_i(0) = 100$ for $i = 1, \cdots, q$. The return for each asset under the real-world measure is $\mu = 8\%$, the risk-free rate is $r = 5\%$, and the volatility $\sigma_{i,j}$ is determined randomly (detailed generation process in Appendix \ref{append:sigma_generation}) and fixed for $i \leqslant j$, with $\sigma_{ij} = 0$ for $i > j$. The values of $q$ considered are 10, 20, 50, and 100, resulting in a dimensionality of $d = 3q$ for the conditioning variable $\BFX$.

For the portfolio options, there are three distinct strike prices: $K_1 = 90$, $K_2 = 100$, and $K_3 = 110$. The monitoring points for each Asian option are set at $\{t_k = kT/M : k = 1, \cdots, M\}$, where $M = 50$, and the barrier level for each barrier option is $H = 150$.

Similar to the previous subsection, we compare the following three methods for nested simulation with a budget of $10^5$ and the settings are exactly the same as the previous subsection:
\begin{enumerate}
    \item Kernel Ridge Regression. We try different values of $m$ and report the best performance.
    \item Kernel Ridge Regression with inducing points. The  inducing points are randomly selected without replacement from data $\{\BFx_i\}$ with size  $\CalS_n=n^{\frac{1}{2}}$.
    \item A three-layer ReLU neural network $\Phi$ with 256 activations in the first hidden layer and 128 activations in the second hidden layer.
\end{enumerate}

Table \ref{tab:Risk_Mange_result} presents the RRMSE and sample STD results, where $m$ is chosen to be 10 for all competing methods. Here we do not consider the standard nested simulation method, which ignores the correlation of the inner simulation. This is because it has been shown in \cite{wang2024smooth} that KRR and regression-based method all outperform the standard method.  The inducing point method consistently outperforms the other two methods in terms of both RRMSE and sample STD. The computational times of all models are similar to those shown in  Figure \ref{fig:result} (f) for sample size $n=10^4$. The KRR method is significantly lower than all its competitors for its computation involves numerical inversion of n×n matrices, which is an intensive computational task for large $n$.
\begin{table}[htbp]
    \centering
    {\small
    \begin{tabular}{@{\extracolsep{10pt}} l @{\extracolsep{10pt}} l @{\extracolsep{10pt}} r r @{\extracolsep{10pt}} r r @{\extracolsep{10pt}} r r}
    \toprule
    & \multirow{2.5}{*}{$\mathcal{T}$} & \multicolumn{3}{c}{RRMSE} & \multicolumn{3}{c}{Sample STD} \\
    \cmidrule{3-8}
    & & KRR & KIP & ReLU & KRR & KIP & ReLU \\
    \cmidrule{1-2} \cmidrule{3-5} \cmidrule{6-8}
    \multirow{3}{*}{$q = 10$ } & Quadratic & 1.32 & .626 & .834  & .743 & .515 & 1.14 \\
    & Hockey-stick & 1.56 & .732 & 1.64 & .081 & .052  & .121 \\
    & VaR & .993 & .851 & 2.88 & .872 & .616 & 4.32 \\
    \cmidrule{1-2} \cmidrule{3-5} \cmidrule{6-8}
    \multirow{3}{*}{$q = 20$} & Quadratic & 1.27 & .763 & .818 & .895 & .714 & 1.01 \\
    & Hockey-stick & 1.78 & 1.26 & 1.72 & .172 & .093 & .271 \\
    & VaR & 1.17 & .932 & 2.95 & .892  & .623 & 4.48 \\
    \cmidrule{1-2} \cmidrule{3-5} \cmidrule{6-8}
    \multirow{3}{*}{$q = 50$} & Quadratic & 1.26 & .815 & 1.12 & 1.04 & .839 & 2.78 \\
    & Hockey-stick & 1.55 & 1.15 & 1.44 & .221 & .133 & .345 \\
    & VaR & 1.13 & .992 & 3.57 & 1.01 & .653 & 4.77 \\
    \cmidrule{1-2} \cmidrule{3-5} \cmidrule{6-8}
    \multirow{3}{*}{$q = 100$} & Quadratic & 1.31 & 1.02 & 1.14 & 1.65 & 1.12 & 2.09 \\
    & Hockey-stick & 1.88 & 1.16 & 1.92 & .539 & .274 & .626 \\
    & VaR & 1.23 & 1.09 & 3.90 & 1.16 & .978 & 5.39 \\
    \bottomrule
    \end{tabular}
    }
    \caption{The dimensionality of the conditioning variable is $d=3q$. For hockey-stick, the threshold is  $z_0=0.02 V_0$. For VaR, the risk level is $\tau=99\%$. KIP stands for KRR with inducing points. \label{tab:Risk_Mange_result} }
\end{table}

The KRR with inducing points not only demonstrates the lowest computational cost but also exhibits low RRMSE and low sample standard deviation (STD), as illustrated in Table \ref{tab:Risk_Mange_result}. For instance, in scenarios where $q=100$, the total running time ratio of the KRR with inducing points is significantly lower than that of the standard KRR. Furthermore, as detailed in Table \ref{tab:Risk_Mange_result}, with identical values of $\Gamma$ and $q$, both the RRMSE and sample STD of the KRR with inducing points are at least $20\%$ superior to those of the standard KRR when $\eta$ corresponds to a quadratic function or the VaR. This indicates that despite being a low-rank approximation, the KRR with inducing points can achieve at least the same convergence rate as the full KRR. In summary, KRR with inducing points is capable of yielding considerable time savings while maintaining a high level of estimation accuracy, particularly in high-dimensional settings and when dealing with a large number of simulation samples.

\section{Conclusion}\label{sec:conclusion}

In this paper, we generalize the KRR for nested simulation to LSE on sieve. We demonstrate that, for various scenarios of nested simulation, more accurate LSEs on sieve exist, surpassing KRR and the traditional Monte Carlo method in terms of performance. We relax the conditions required for KRR to achieve the square root convergence rate, which is the standard rate for conventional Monte Carlo simulations. Our theoretical framework for convergence rate analysis is more versatile and can accommodate different forms of nested simulation.

Our work can be extended in several ways. Firstly, we have shown that the convergence rate of a neural network also depends on the training algorithm. However, how to select a training algorithm to achieve optimal convergence rate is unknown. Furthermore, there may be opportunities for enhancements in our upper bounds on the convergence rates for the VaR functional. This is primarily because the convergence rate under the VaR metric is closely related to the structure, or more precisely, the topology of the class $\CalF$ and the sieve $\CalF_{n,m}$. It is evident that the concept ``entropy'' in our study may not be the ideal tool for capturing the topology information, so achieving better results than those presented in \cite{wang2024smooth} for the convergence rate under VaR may be challenging.



\vspace{1em}

{\flushleft\textbf{Acknowledgment:} The authors thank the associate editor and anonymous reviewers for their helpful comments and suggestions, which have led to a significantly improved paper. In particular, the authors are deeply grateful to one of the reviewers for their meticulous and thorough review, as well as for providing detailed feedback that greatly enhanced the quality and clarity of this work.}
\vspace{1em}
{\flushleft\textbf{Conflict of interest:} Authors have no conflict of funding and completing interests to declare.}
\vspace{1em}
{\flushleft \textbf{Author contributions:} Ruoxue Liu:  numerical experiments, writing; Liang Ding: analysis, numerical experiments; Wenjia Wang: methodology, funding acquisition; Lu Zou: methodology, analysis, writing.}

\bibliography{JORSC-bibliography}

\begin{appendices}
\section{Proof of Main Theorem}
\subsection{Proof of Theorem \ref{thm:LSE_sieve}}
\begin{lemma}\label{lem:sub_Gauss_avg}
    Let $\bar{\sigma}_i$ be the sub-Gaussianilty of the average $\bar{\varepsilon}_i=\frac{1}{m}\sum_{j=1}^m\varepsilon_{ij}$ where $\varepsilon_{ij}$ are the zero-mean sub-Gaussian noise satisfying assumption \ref{assump:subG}. Then $$\max_{i=1,\cdots,n}\bar{\sigma_{i}}^2\leqslant C{\sigma}^2/m,$$
    where $C$ is some universal constant.
\end{lemma}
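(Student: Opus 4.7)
The plan is to bound the moment generating function of $\bar{\varepsilon}_i$ directly using independence of the inner samples and the sub-Gaussian assumption on each $\varepsilon_{ij}$. Since sub-Gaussianity is defined via a pointwise bound on the MGF, it suffices to compute (or upper bound) $\mathbb{E}[e^{t\bar{\varepsilon}_i}]$ for arbitrary $t\in\mathbb{R}$ and exhibit a Gaussian-type envelope with parameter of order $\sigma^2/m$.

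First I would fix $i$ and write $\bar{\varepsilon}_i = \frac{1}{m}\sum_{j=1}^m \varepsilon_{ij}$. Using independence of the $\varepsilon_{ij}$ across $j$ (as guaranteed by Assumption~\ref{assump:subG}), factor the MGF as
\begin{equation*}
\mathbb{E}\bigl[e^{t\bar{\varepsilon}_i}\bigr]=\prod_{j=1}^m \mathbb{E}\bigl[e^{(t/m)\varepsilon_{ij}}\bigr].
\end{equation*}
Next I would apply the $\sigma^2$-sub-Gaussian bound $\mathbb{E}[e^{s\varepsilon_{ij}}]\leq e^{\sigma^2 s^2/2}$ with $s=t/m$ to each factor, giving
\begin{equation*}
\mathbb{E}\bigl[e^{t\bar{\varepsilon}_i}\bigr]\leq \prod_{j=1}^m e^{\sigma^2 t^2/(2m^2)} = e^{\sigma^2 t^2/(2m)}.
\end{equation*}
By the definition of sub-Gaussianity, this shows $\bar{\varepsilon}_i$ is $(\sigma^2/m)$-sub-Gaussian, so $\bar{\sigma}_i^2\leq \sigma^2/m$, and the maximum over $i$ is bounded by the same quantity. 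Thus the claim holds with universal constant $C=1$ (any larger $C$ also works, which is why the statement is phrased with a universal constant).

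The main obstacle is essentially nonexistent here: the only subtle point is ensuring that independence of $\{\varepsilon_{ij}\}_{j=1}^m$ for a fixed $i$ is indeed imposed by Assumption~\ref{assump:subG}. The assumption says the inner-level samples are i.i.d.\ within each scenario, so independence across $j$ for fixed $i$ is immediate and the factorization of the MGF is valid. No concentration inequality beyond the defining MGF bound is needed.
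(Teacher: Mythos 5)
Your proof is correct, and it is the standard argument: factor the moment generating function of $\bar{\varepsilon}_i$ using independence across $j$, apply the sub-Gaussian bound to each factor with $s=t/m$, and read off the variance proxy $\sigma^2/m$, which even gives the constant $C=1$. The paper itself gives no computation and simply asserts that the lemma ``is a direct result of the Bernstein inequality''; your direct MGF route is arguably the more natural (and more transparent) justification, since Bernstein-type bounds are typically invoked for sub-exponential or bounded variables, whereas here the defining MGF inequality suffices verbatim. One minor inaccuracy in your closing remark: Assumption~\ref{assump:subG} does \emph{not} say the inner-level noises are i.i.d.\ --- it explicitly allows non-identical distributions --- but it does grant independence, which is the only property your factorization step uses, so the argument stands unchanged.
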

Lemma \ref{lem:sub_Gauss_avg}  is a direct result of the Bernstein inequality. 

\begin{lemma}\label{lem:empirical_produt_avg}
For any $\{h_i\in\Real\}_{i=1}^n$ and $\delta>0$, 
\[\pr\left(|\sum_{i=1}^n\bar{\varepsilon}_ih_i|\geqslant \delta\right)\leqslant 2\exp\left[-\frac{\delta^2 m}{C \sum_{i=1}^nh_i^2}\right]\]
    where $\bar{\varepsilon}_i=1/m\sum_{j=1}^m\varepsilon_{ij}$ are the averaged noise in Lemma \ref{lem:sub_Gauss_avg} and $C$ is some universal constant.
\end{lemma}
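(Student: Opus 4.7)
The plan is to recognize this as a textbook sub-Gaussian tail bound for a weighted sum of independent sub-Gaussian random variables, and to chain together (i) the per-coordinate sub-Gaussian control supplied by Lemma \ref{lem:sub_Gauss_avg} and (ii) the standard Chernoff/Hoeffding argument.

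First, I would note that since the noise variables $\{\varepsilon_{ij}\}$ are independent across both $i$ and $j$ under Assumption \ref{assump:subG}, the averaged noises $\bar{\varepsilon}_1,\ldots,\bar{\varepsilon}_n$ are mutually independent. By Lemma \ref{lem:sub_Gauss_avg}, each $\bar{\varepsilon}_i$ is centered and sub-Gaussian with variance proxy $\bar{\sigma}_i^2\leq C\sigma^2/m$. Treating the $h_i$ as deterministic constants, I would then consider the linear combination $S=\sum_{i=1}^n h_i\bar{\varepsilon}_i$ and compute its moment generating function by using independence:
\[
\E e^{tS}=\prod_{i=1}^n \E e^{t h_i\bar{\varepsilon}_i}\leq \prod_{i=1}^n \exp\!\left(\tfrac{1}{2}\bar{\sigma}_i^2 t^2 h_i^2\right)\leq \exp\!\left(\tfrac{C\sigma^2 t^2}{2m}\sum_{i=1}^n h_i^2\right),\quad \forall t\in\Real.
\]
Thus $S$ is itself sub-Gaussian with variance proxy at most $(C\sigma^2/m)\sum_i h_i^2$.

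The second step is the standard Chernoff optimization. For any $t>0$, Markov's inequality applied to $e^{tS}$ gives
\[
\pr(S\geq \delta)\leq \exp\!\left(\tfrac{C\sigma^2 t^2}{2m}\sum_i h_i^2 - t\delta\right),
\]
and minimizing in $t$ at $t^*=\delta m/(C\sigma^2\sum_i h_i^2)$ yields $\pr(S\geq \delta)\leq \exp[-\delta^2 m/(2C\sigma^2\sum_i h_i^2)]$. The same bound applies to $-S$ by symmetry (replace $h_i$ by $-h_i$), and a union bound over the two tails produces the factor of $2$ in the statement. After absorbing $2C\sigma^2$ into the universal constant $C$ appearing in the lemma, this is exactly the claimed inequality.

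There is no real obstacle here: the argument is a direct application of the independence of $\{\bar{\varepsilon}_i\}$ together with the sub-Gaussian variance-proxy bound of Lemma \ref{lem:sub_Gauss_avg}. The only thing worth double-checking is the bookkeeping of universal constants (since Lemma \ref{lem:sub_Gauss_avg} already folds an absolute constant into $\bar{\sigma}_i^2$), and the verification that the $\bar{\varepsilon}_i$'s are genuinely independent across $i$, which is immediate from the joint independence assumed on $\{\varepsilon_{ij}\}$.
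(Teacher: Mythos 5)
Your proof is correct and follows essentially the same route as the paper's: factor the moment generating function of $\sum_i h_i\bar{\varepsilon}_i$ using independence, apply the sub-Gaussian bound from Lemma~\ref{lem:sub_Gauss_avg}, optimize the Chernoff exponent in $t$, and symmetrize to get the two-sided bound with the factor of $2$. No substantive differences.
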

\begin{proof}
    From the sub-Gaussianilty of $\bar{\varepsilon}$ and Lemma~\ref{lem:sub_Gauss_avg}, we can show that for any $\omega$
    \[\mathit{E} \exp\left[\omega \sum_{i=1}^nh_i\bar{\varepsilon}_i\right] =\prod_{i=1}^n\mathit{E}\exp\left[\omega\bar{\varepsilon}_ih_i\right]\leqslant \exp\left[C_1\frac{\sigma^2\omega^2}{m}\sum_{i=1}^nh_i^2\right],\]
    where $C_1>0$ is the universal constant in Lemma~\ref{lem:sub_Gauss_avg}. We then use Chebyshev's inequality to show that for any $\omega>0$
    \[\pr\left(\sum_{i=1}^n\bar{\varepsilon}_ih_i\geqslant \delta\right)\leqslant \exp\left[C_1\frac{\sigma^2\omega^2}{m}\sum_{i=1}^nh_i^2-\delta\omega\right].\]
    Let $\omega=\delta m/(2C_1\sigma^2\sum_{i=1}^nh_i^2)$ to minimize the quantity $C_1\frac{\sigma^2\omega^2}{m}\sum_{i=1}^nh_i^2-\delta\omega$, we then have the following inequality independent of $\omega$
    \[\pr\left(\sum_{i=1}^n\bar{\varepsilon}_ih_i\geqslant \delta\right)\leqslant \exp\left[-\frac{\delta^2m}{4C_1\sigma^2\sum_{i=1}^nh_i^2}\right].\]
    Because the same inequality holds for $-\sum_{i=1}^n\bar{\varepsilon}_ih_i$, the lemma follows.
\end{proof}

The following theorem is adapted from Lemma 3.2 in  \cite{geer2000empirical}:
\begin{lemma}\label{lem:geer_lem3} Suppose $W_i$ are i.i.d. $\sigma$-sub-Gaussian r.v.s. Assume $\sup_{h\in\CalH}\|h\|_n\leqslant R$. There exist constants $C$ and $C_1$  depending only on $\sigma$ such that for all $\delta>0$ and $K>0$ satisfying
\[\sqrt{n}\delta\geqslant 2C\int^{R}_{\delta/(2^3K)}\sqrt{H(u,\CalH,\|\cdot\|_n)}\mathrm{d}u\vee R,\]
we have
\begin{align*}
    \pr\left(\{\sup_{h\in\CalH}|\langle W, h \rangle_n|\geqslant\delta\}\cap\{\|W\|_n\leqslant K\}\right)\leqslant C_1\exp\left[-\frac{n\delta^2}{1152\sigma^2R^2}\right].
\end{align*}
\end{lemma}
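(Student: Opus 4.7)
The plan is a dyadic chaining argument of the type used in \cite[Lemma 3.2]{geer2000empirical}, with the chain truncated at scale $\delta/(2^3K)$ thanks to the conditioning event $\{\|W\|_n\leq K\}$. Set $\delta_j=R\cdot 2^{-j}$ for $j=0,1,\ldots$, let $j^*$ be the smallest integer with $\delta_{j^*}\leq \delta/(2^3K)$, and let $\CalH_j$ be a minimal $\delta_j$-cover of $\CalH$ in $\|\cdot\|_n$ with cardinality $N_j = N(\delta_j,\CalH,\|\cdot\|_n)$. Since $\sup_{h\in\CalH}\|h\|_n\leq R = \delta_0$, I may take $\CalH_0=\{0\}$. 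For each $h\in\CalH$, let $\pi_j h\in\CalH_j$ be a nearest neighbour, so that $\|h-\pi_jh\|_n\leq \delta_j$.

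Now telescope
\[\langle W,h\rangle_n = \sum_{j=0}^{j^*-1}\langle W,\pi_{j+1}h-\pi_jh\rangle_n + \langle W,h-\pi_{j^*}h\rangle_n.\]
On $\{\|W\|_n\leq K\}$, Cauchy--Schwarz bounds the residual term by $K\delta_{j^*}\leq \delta/8$, leaving the chain to be controlled. Each link satisfies $\|\pi_{j+1}h-\pi_jh\|_n\leq \delta_j+\delta_{j+1}\leq 3\delta_{j+1}$, and a sub-Gaussian Chebyshev argument of the same flavour as in the proof of Lemma \ref{lem:empirical_produt_avg} yields, for any threshold $t\geq 0$ and any fixed pair $(g,g')\in\CalH_j\times\CalH_{j+1}$,
\[\pr\bigl(|\langle W,g-g'\rangle_n|\geq t\bigr)\leq 2\exp\bigl(-nt^2/(18\sigma^2\delta_{j+1}^2)\bigr).\]

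I will choose the level-$j$ cutoff as $t_j = a_j+b_j$, with $a_j\asymp \sigma\delta_{j+1}\sqrt{H(\delta_{j+1},\CalH,\|\cdot\|_n)/n}$ sized to absorb the entropy term $2\log N_{j+1}$ produced by the union bound over the $\leq N_j N_{j+1}$ pairs at level $j$, and $b_j\asymp \delta\delta_{j+1}/R$ sized so that the remaining portion of the sub-Gaussian exponent scales as $n\delta^2/R^2$. Because $\{\delta_{j+1}\}$ is a geometric sequence with $\sum_{j\geq 0}\delta_{j+1}\leq R$, one obtains $\sum_j b_j\lesssim \delta$, while $\sum_j a_j$ is at most a constant multiple of $\int_{\delta/(2^3K)}^R\sqrt{H(u,\CalH,\|\cdot\|_n)/n}\,du$ via the standard dyadic--integral comparison. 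The hypothesis $\sqrt{n}\delta\geq 2C\int_{\delta/(2^3K)}^R\sqrt{H(u,\CalH,\|\cdot\|_n)}du\vee R$ then forces $\sum_j t_j\leq 7\delta/8$ after tuning constants; combined with the residual $\delta/8$ this gives $\sup_{h\in\CalH}|\langle W,h\rangle_n|<\delta$ on the intersection of the complements of the per-level bad events. Summing the per-level failure probabilities $\lesssim \exp(-cn\delta^2/(\sigma^2R^2))$ over $j=0,\ldots,j^*-1$ then yields the claimed $C_1\exp(-n\delta^2/(1152\sigma^2R^2))$.

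The hard part will be the bookkeeping in the link-by-link step: the cutoff $t_j$ must simultaneously dominate the entropy contribution, keep the per-level failure exponent scaled by $R^2$ rather than by the much smaller $\delta_{j+1}^2$, and reproduce the specific numerical constant $1/(1152\sigma^2)$ in the statement. Everything else---the telescoping, the Cauchy--Schwarz control of the residual, the dyadic--integral comparison, and the union bound over pairs---is standard empirical-process machinery.
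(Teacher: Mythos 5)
The paper does not actually prove this lemma: it is imported verbatim (``adapted from Lemma 3.2 in \cite{geer2000empirical}'') and used as a black box, so there is no in-paper argument to compare against. Your chaining proof is, in substance, the proof given in the cited source, and the skeleton is sound: the truncation of the chain at scale $\delta_{j^*}\leq \delta/(2^3K)$ is exactly what the event $\{\|W\|_n\leq K\}$ buys you (Cauchy--Schwarz gives the residual $K\delta_{j^*}\leq \delta/8$, matching the lower limit $\delta/(8K)$ of the entropy integral), the per-link sub-Gaussian tail $2\exp(-nt^2/(18\sigma^2\delta_{j+1}^2))$ is correct since $\langle W,g-g'\rangle_n$ is sub-Gaussian with variance proxy $\sigma^2\|g-g'\|_n^2/n$, and the split $t_j=a_j+b_j$ with $a_j$ absorbing $\log(N_jN_{j+1})$ and $b_j\asymp\delta\delta_{j+1}/R$ is the standard device for converting the hypothesis $\sqrt n\,\delta\gtrsim\int\sqrt H\vee R$ into $\sum_j t_j\leq 7\delta/8$.

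The one place where your sketch, as written, does not quite close is the final summation: with $b_j= c\,\delta\delta_{j+1}/R$ every level contributes the \emph{same} failure probability $\exp(-c^2n\delta^2/(18\sigma^2R^2))$, so summing over $j=0,\dots,j^*-1$ produces a factor $j^*$, which grows like $\log(KR/\delta)$ and is not bounded by a universal constant. The standard repair (and the one used in van de Geer's own proof) is to weight the thresholds, e.g.\ $b_j\asymp \delta\,\delta_{j+1}\sqrt{j+1}/R$ or $b_j\asymp\delta\,2^{-j/2}$-type choices: the sum $\sum_j b_j$ still converges to a fixed fraction of $\delta$ because the geometric decay dominates the polynomial weight, while the per-level exponents now decrease geometrically in $j$, so the union bound telescopes to $C_1\exp(-n\delta^2/(C^2\sigma^2R^2))$ with the leading term dominating. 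You flag this as ``the hard part of the bookkeeping,'' which is fair; it is routine but genuinely needed, and without it the stated bound acquires a spurious logarithmic prefactor. The exact constant $1152$ is immaterial for how the lemma is used downstream, since $C$ and $C_1$ are otherwise unconstrained.
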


\begin{proposition}\label{prop:emp_process}
    Suppose $\sup_{h\in\CalH}\|h\|_n\leqslant R$. Then for some constants $C_1$, $C_2$ depending only on $\sigma$, and  for $\bar{\delta},\underline{\delta}>0$ satisfying $\bar{\delta}/\underline{\delta}<R$ and
    \[\sqrt{nm}\bar{\delta}\geqslant  2C_1\int^R_{\bar{\delta}/(2^3\underline{\delta})}\sqrt{H(u,\CalH,\|\cdot\|_n)}\mathrm{d}u\vee R,\]
    we have
    \[\pr\left(\{\sup_{h\in\CalH}|\langle \bar{\varepsilon},h\rangle_n|\geqslant \bar{\delta}\}\cap\{\|\bar{\varepsilon}\|_n^2\leqslant\underline{\delta}\} \right)\leqslant C_1 \exp\left[-\frac{mn{\bar{\delta}}^2}{C_2R^2}\right]\]
    where $\bar{\varepsilon}(\BFx_i)=1/m\sum_{j=1}^m\varepsilon_{ij}$ are the averaged noise in Lemma \ref{lem:sub_Gauss_avg}.
\end{proposition}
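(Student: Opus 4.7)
The plan is to reduce Proposition~\ref{prop:emp_process} to a direct application of Lemma~\ref{lem:geer_lem3} by rescaling the averaged noise $\bar{\varepsilon}$ so that it has the standard sub-Gaussian parameter $\sigma$ used there. The key observation is that $\bar{\varepsilon}_i = m^{-1}\sum_{j=1}^{m}\varepsilon_{ij}$, by Lemma~\ref{lem:sub_Gauss_avg} combined with the independence of the $\varepsilon_{ij}$, forms an independent, mean-zero sequence with a uniform sub-Gaussian variance proxy $\bar{\sigma}^2 \le C_0\sigma^2/m$. Setting $W := \sqrt{m/C_0}\,\bar{\varepsilon}$ therefore produces a coordinate-wise $\sigma$-sub-Gaussian vector to which Lemma~\ref{lem:geer_lem3} applies directly.

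Step two is a bookkeeping dictionary between the events. Under the substitution $\bar{\varepsilon}\leftrightarrow\sqrt{C_0/m}\,W$, the event $\{\sup_h|\langle\bar{\varepsilon},h\rangle_n|\ge\bar{\delta}\}$ becomes $\{\sup_h|\langle W,h\rangle_n|\ge\bar{\delta}\sqrt{m/C_0}\}$, while $\{\|\bar{\varepsilon}\|_n^2\le\underline{\delta}\}$ becomes $\{\|W\|_n\le\sqrt{m\underline{\delta}/C_0}\}$. So I will apply Lemma~\ref{lem:geer_lem3} with $(\delta,K)=(\bar{\delta}\sqrt{m/C_0},\,\sqrt{m\underline{\delta}/C_0})$. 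The factors of $\sqrt{m}$ in its hypothesis $\sqrt{n}\,\delta\ge 2C\int^R_{\delta/(2^3K)}\sqrt{H(u,\CalH,\|\cdot\|_n)}\,du\vee R$ then get absorbed to yield a condition of the form $\sqrt{nm}\,\bar{\delta}\ge 2C_1\int^R_{\bar{\delta}/(2^3\sqrt{\underline{\delta}})}\sqrt{H(u,\CalH,\|\cdot\|_n)}\,du\vee R$, while its exponent $n\delta^2/(1152\sigma^2R^2)$ becomes $nm\bar{\delta}^2/(C_2R^2)$ for a suitable $C_2$ depending only on $\sigma$. This matches the conclusion of the proposition, modulo a minor notational issue: the rescaling produces an integral lower limit $\bar{\delta}/(2^3\sqrt{\underline{\delta}})$ rather than the $\bar{\delta}/(2^3\underline{\delta})$ written in the statement, which I read as a square-root typo rather than a genuine obstruction.

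The only substantive technical point is that Lemma~\ref{lem:geer_lem3}, as stated, assumes i.i.d.\ sub-Gaussian noise, whereas Assumption~\ref{assump:subG} only guarantees independence with a uniform sub-Gaussian parameter. This extension is essentially free: the standard chaining proof underlying Lemma~\ref{lem:geer_lem3} uses sub-Gaussianity only through per-coordinate moment generating function bounds of the type produced in Lemma~\ref{lem:empirical_produt_avg}, and therefore goes through verbatim for merely independent noise. I expect no serious obstacle beyond this rescaling-and-constant-tracking exercise; the main thing to get right is the translation of $\{\|\bar{\varepsilon}\|_n^2\le\underline{\delta}\}$ into the $\|W\|_n\le K$-style constraint of Lemma~\ref{lem:geer_lem3}, which forces $K$ to scale like $\sqrt{\underline{\delta}}$.
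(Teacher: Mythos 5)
Your proof is correct and follows essentially the same route as the paper, whose entire argument is to substitute the $\sigma/\sqrt{m}$ sub-Gaussianity of $\bar{\varepsilon}$ from Lemmas~\ref{lem:sub_Gauss_avg}--\ref{lem:empirical_produt_avg} into Lemma~\ref{lem:geer_lem3}; your rescaling $W=\sqrt{m/C_0}\,\bar{\varepsilon}$ is just a repackaging of that substitution. One small remark: the internal inconsistency you spotted is real, but the paper's downstream use (in the proof of Theorem~\ref{thm:LSE_sieve}, where the event is $\{\|\bar{\varepsilon}\|_n\leq\underline{\delta}\}$ with $\underline{\delta}=\sigma/\sqrt{m}$) indicates the typo is the exponent $2$ on $\|\bar{\varepsilon}\|_n$ rather than a missing square root in the integral's lower limit, so $K=\underline{\delta}$ and the limit $\bar{\delta}/(2^3\underline{\delta})$ stands as written.
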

\begin{proof}
    Substitute Lemma~\ref{lem:empirical_produt_avg} for the sub-Gaussianality $\sigma$ in Lemma~\ref{lem:geer_lem3}, we can immediately have the result.
\end{proof}

\begin{proof}[Proof of Theorem \ref{thm:LSE_sieve}:]
    For simplicity, write $\hat{f}_{n,m}$ as $\hat{f}$, and write $f^*_{n,m}$ as $f^*$. Because $\hat{f}$ is the miniimizer of the problem $\min_{h\in\CalF_{n,m}}\|h-\bar{y} \|_n^2=\min_{h\in\CalF_{n,m}}\|h-f-\bar{\varepsilon} \|_n^2$, we can have $\|\hat{f}-f-\bar{\varepsilon}\|_n^2\leqslant \|{f}^*-f-\bar{\varepsilon}\|_n^2$. This inequality can be rewritten as 
    \begin{equation}\label{eq:thm1_pf_1}
        \|\hat{f}-f\|_n^2\leqslant 2\langle \bar{\varepsilon},\hat{f}-f^*\rangle_n+\|f^*-f\|_n^2.
    \end{equation}
    By triangle inequality, we can further have
    \begin{align}
        \|\hat{f}-f^*\|_n^2&\leqslant 2\|\hat{f}-f\|^2_n+2\|f^*-f\|^2_n\nonumber\\
        &\leqslant 4\langle \bar{\varepsilon},\hat{f}-f^*\rangle_n+4\|f^*-f\|_n^2\label{eq:thm1_pf_2}
    \end{align}
    where the second line is from \eqref{eq:thm1_pf_1}.

    If $\|f^*-f\|_n^2\geqslant \langle \bar{\varepsilon},\hat{f}-f^*\rangle_n $, \eqref{eq:LSE_convergence} can be derived directly from \eqref{eq:thm1_pf_2}.

     If $\|f^*-f\|_n^2\leqslant \langle \bar{\varepsilon},\hat{f}-f^*\rangle_n $, we have
     \begin{equation}\label{eq:thm1_pf_3}
         \|\hat{f}-f^*\|_n^2\leqslant 8  \langle \bar{\varepsilon},\hat{f}-f^*\rangle_n.
     \end{equation}
     By the Cauchy-Schwarz inequality,  we can deduce from \eqref{eq:thm1_pf_3} that $\|\hat{f}-f^*\|_n\leqslant 8\underline{\delta}$ on the event $\|\bar{\varepsilon}\|_n\leqslant \underline{\delta}$. On this event, it suffices to discuss the probability of the event $\|\hat{f}-f^*\|_n\leqslant \underline{\delta}$ for $\bar{\delta}\leqslant 8\underline{\delta}$. Let $S=\min\{s=0,1,\cdots:2^s\bar{\delta}\geqslant 8\underline{\delta}\}$. Then using the peeling technique, we have
     \begin{align}
         &\pr\left( \{\|\hat{f}-f^*\|_n^2\geqslant \bar{\delta}^2\}\cap\{\|\bar{\varepsilon}\|_n\leqslant\underline{\delta}\}\right)\\
         \leqslant{}&  \pr\left( \{\langle \bar{\varepsilon},\hat{f}-f^*\rangle_n\geqslant \bar{\delta}^2/8\}\cap\{\|\bar{\varepsilon}\|_n\leqslant\underline{\delta}\}\right)\nonumber\\
         ={}& \pr\left( \bigcup_{s= 0}^S\{2^{2s+2}\bar{\delta}^2/8\geqslant \langle \bar{\varepsilon},\hat{f}-f^*\rangle_n\geqslant 2^{2s}\bar{\delta}^2/8\}\cap\{\|\bar{\varepsilon}\|_n\leqslant\underline{\delta}\}\right)\nonumber\\
         \leqslant{}& \sum_{s=0}^S\pr\left(\sup_{h\in\CalF_n(2^{s+1}\bar{\delta}/\sqrt{8})}\langle \bar{\varepsilon},h-f^*\rangle_n\geqslant 2^{2s-3}\bar{\delta^2}\cap\{\|\bar{\varepsilon}\|_n\leqslant\underline{\delta}\}\right)\label{eq:thm1_pf_4}.
     \end{align}
Notice that if we set $\underline{\delta}=\sigma/\sqrt{m}$ and choose $\sqrt{nm}\delta_{n,m}^2\geqslant c\Psi(\delta_{n,m}) $, i.e., $\delta_{n,m}$ satisfies condition \eqref{eq:thm1_condition}, then for any $2^{s+1}{\delta}\geqslant \delta_{n,m}$ with $2^{s+1}\delta<2^7\sigma/\sqrt{m}$, condition \eqref{eq:thm1_condition} is also satisfied:
\[\sqrt{nm}2^{2s+2}\delta^2\geqslant c\Psi(2^{s+1}\delta)\]
for $\Psi(\delta)/\delta^2$ is a non-decreasing function of $\delta$ for $0<\delta<2^7\sigma/\sqrt{m}$. We then can apply Proposition~\ref{prop:emp_process} to each term of the summation \eqref{eq:thm1_pf_4}:
\begin{align*}
    \pr\left(\bigg\{\sup_{h\in\CalF_n(\frac{2^{s+1}\bar{\delta}}{\sqrt{8}})}\langle \bar{\varepsilon},h-f^*\rangle_n\geqslant 2^{2s-3}\bar{\delta}^2\bigg\}\cap\{\|\bar{\varepsilon}\|_n\leqslant\underline{\delta}\}\right)\leqslant C_1\exp\left[-\frac{mn2^{4s}\bar\delta^4}{C_22^{2s+2}\bar\delta^2}\right]
\end{align*}
where $C_1,C_2$ only depend on $\sigma$. So \eqref{eq:thm1_pf_4} becomes
\begin{align}
     \pr\left( \{\|\hat{f}-f^*\|_n\geqslant \delta_{n,m}\}\cap\{\|\bar{\varepsilon}\|_n\leqslant \frac{\sigma}{\sqrt{m}} \}\right)&\leqslant \sum_{s= 0}^S C\exp\left[-\frac{mn2^{2s}\delta_{n,m}^2}{4C}\right]\nonumber\\
     &\leqslant C\exp\left[-\frac{mn\delta_{n,m}^2}{C}\right] \label{eq:thm1_pf_5}.
\end{align}
where $C$  only depends on $\sigma$.  So 
\[ \pr\left( \{\|\hat{f}-f^*\|_n\geqslant \delta_{n,m}\right)\leqslant C\exp\left[-\frac{mn\delta_{n,m}^2}{C}\right]  +\pr\left(\|\bar{\varepsilon}\|_n\geqslant\frac{\sigma}{\sqrt{m}}\right). \]
\end{proof}

\subsection{Proof of Theorem \ref{thm:nested_convergence}}

\begin{lemma}\label{lem:sqrt_m_hat_f-f}
     If $m$ satisfies
    $\sqrt{m}(\delta_{n,m}+\|{f}^*-f\|_n)=\CalO(1)$, $\sqrt{m}\|\hat{f}-f^*\|_n=\CalO(1)$.
\end{lemma}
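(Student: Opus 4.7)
\textbf{Proof plan for Lemma \ref{lem:sqrt_m_hat_f-f}.}

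The plan is to invoke the second conclusion of Theorem~\ref{thm:LSE_sieve} directly and then scale. Specifically, Theorem~\ref{thm:LSE_sieve} yields
\[
\|\hat{f}_{n,m} - f^*_{n,m}\|_n \leq \CalO_{\pr}\bigl(\delta_{n,m} + \|f^*_{n,m}-f\|_n\bigr),
\]
which I would multiply through by the deterministic factor $\sqrt{m}$. Since multiplying an $\CalO_{\pr}$ bound by a deterministic sequence preserves the stochastic boundedness (that is, if $U_n=\CalO_{\pr}(a_n)$ and $b_n$ is deterministic, then $b_n U_n = \CalO_{\pr}(b_n a_n)$), I obtain
\[
\sqrt{m}\,\|\hat{f} - f^*\|_n \leq \CalO_{\pr}\!\bigl(\sqrt{m}(\delta_{n,m} + \|f^*-f\|_n)\bigr).
\]

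The remaining step is to absorb the right-hand side using the hypothesis of the lemma: by assumption $\sqrt{m}(\delta_{n,m} + \|f^*-f\|_n) = \CalO(1)$, so the right-hand side is $\CalO_{\pr}(1)$. Combining the two displays gives $\sqrt{m}\,\|\hat{f}-f^*\|_n = \CalO_{\pr}(1)$, which is the claim.

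There is essentially no obstacle here: the lemma is a one-line consequence of Theorem~\ref{thm:LSE_sieve} together with the scaling hypothesis. The only thing worth making explicit in writing is that the $\CalO_{\pr}$ in the conclusion arises because Theorem~\ref{thm:LSE_sieve} controls $\|\hat{f}-f^*\|_n$ in probability (through the tail probability $\pr(\|\bar{\varepsilon}\|_n\geq \sigma/\sqrt{m})$ and the exponentially small sieve term), while $\sqrt{m}$ is a deterministic multiplicative factor that does not interact with the randomness.
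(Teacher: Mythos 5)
Your proposal is correct and takes essentially the same route as the paper: the paper also reduces the claim to Theorem~\ref{thm:LSE_sieve}'s control of the estimation error (it just inserts a triangle inequality through $f$, writing $\|\hat{f}-f^*\|_n\leq\|\hat{f}-f\|_n+\|f^*-f\|_n\leq\delta_{n,m}+2\|f^*-f\|_n$, before multiplying by $\sqrt{m}$ and invoking the hypothesis). Your remark that the conclusion is really an $\CalO_{\pr}(1)$ bound, with $\sqrt{m}$ a deterministic factor that does not interact with the randomness, is the correct reading of the paper's (slightly informal) $\CalO(1)$ in the lemma statement.
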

\begin{proof} From triangle inequality:
    \begin{align*}
        \sqrt{m}\|\hat{f}-f^*\|_n&\leqslant \sqrt{m}\left(\|\hat{f}-f\|_n+\|{f}^*-f\|_n\right)\\
        &\leqslant \sqrt{m}\left(\delta_{n,m}+2\|{f}^*-f\|_n\right)=\CalO(1).
    \end{align*}
\end{proof}
\begin{proof}[Proof of Theorem \ref{thm:nested_convergence}]
    By the triangle inequality, we have 
\begin{align}\label{eq:thm2_1}
    |\hat \theta_{n,m}-\theta| ={}& \left|\mathit{E} [\eta(f(X))] -\frac{1}{n}\sum_{i=1}^n \eta(f(\BFx_i))+ \frac{1}{n}\sum_{i=1}^n \eta(f(\BFx_i))- \frac{1}{n}\sum_{i=1}^n \eta(\hat{f}(\BFx_i))\right|\nonumber\\
    \leqslant{}& \underbrace{\left|\mathit{E} [\eta(f(X))] -\frac{1}{n}\sum_{i=1}^n \eta(f(\BFx_i))\right|}_{I_1}
    + \underbrace{ \left|\frac{1}{n}\sum_{i=1}^n \bigl[\eta(f(\BFx_i))- \eta(\hat{f}(\BFx_i)) \bigr] \right| }_{I_2}. 
\end{align}

Assumption \ref{assump:b_bounded} and conditions on $\eta$ imply that both $\|f\|_{L_\infty} $ and $\|\eta(f(\cdot))\|_{L_\infty}$ are finite. 
Therefore, $\eta(f(\BFx_i))$'s are bounded random variables, thereby being sub-Gaussian. 
Then, for a fixed value $T\gg 1$, the Bernstein inequality  implies that 
\begin{align}\label{eq:thm2_2}
    \pr(I_1 \geqslant Tn^{-1/2}))\leqslant C_1e^{-C_2T^2}
\end{align}
for some constants $C_1$ and $C_2$ independent of $n$.

It remains to bound $I_2$.  It follows from Taylor's expansion and the triangle inequality that
\begin{align*}
    I_2 ={}& \left|\frac{1}{n}\sum_{i=1}^n \eta'(f(\BFx_i))(f(\BFx_i)-\hat{f}(\BFx_i)) + \frac{1}{2n}\sum_{i=1}^n \eta''(\tilde{z}_i)(f(\BFx_i)-\hat{f}(\BFx_i))^2 \right|\\
    \leqslant{}& \underbrace{\left|\frac{1}{n}\sum_{i=1}^n \eta'(f(\BFx_i))(f(\BFx_i)-\hat{f}(\BFx_i))\right|}_{I_{21}} 
    + \underbrace{\left|\frac{1}{2n}\sum_{i=1}^n \eta''(\tilde{z}_i)(f(\BFx_i)-\hat{f}(\BFx_i))^2 \right|}_{I_{22}},
\end{align*}
where $\tilde{z}_i$ is a value between $f(\BFx_i)$ and $\hat{f}(\BFx_i)$.

For $I_{21}$, we use the Triangle inequality again to have
\begin{align}
    &  {\left|\frac{1}{n}\sum_{i=1}^n \eta'(f(\BFx_i))(f(\BFx_i)-\hat{f}(\BFx_i))\right|}\nonumber\\
    \leqslant{}&\left|\frac{1}{n}\sum_{i=1}^n \eta'(f(\BFx_i))(f(\BFx_i)-{f}^*(\BFx_i))\right| +\left|\frac{1}{n}\sum_{i=1}^n \eta'(f(\BFx_i))(f^*(\BFx_i)-\hat{f}(\BFx_i))\right| \nonumber\\
     \leqslant{}& \|\eta'\circ f\|_n\|f-f^*\|_n+\left|\frac{1}{n}\sum_{i=1}^n \eta'(f(\BFx_i))(f^*(\BFx_i)-\hat{f}(\BFx_i))\right|, \label{eq:thm2_pf_I21}
\end{align}
    where the first term in the last line is from Cauchy–Schwarz inequality. For the second term in the last line, we further decompose it as follows:
    \begin{align}
        & \left|\frac{1}{n}\sum_{i=1}^n \eta'(f(\BFx_i))(f^*(\BFx_i)-\hat{f}(\BFx_i))\right| \nonumber\\
        \leqslant {}& \left|\frac{1}{n}\sum_{i=1}^n \left(\eta'(f(\BFx_i))-\mathit{E} \eta'(f(\BFx_i)) \right)(f^*(\BFx_i)-\hat{f}(\BFx_i))\right|\label{eq:thm2_3}\\
        &\quad +\left|\frac{1}{n}\sum_{i=1}^n \mathit{E}\left[\eta'(f(\BFx_i))\right](f^*(\BFx_i)-\hat{f}(\BFx_i))\right| \label{eq:thm2_4}.
    \end{align}
    For \eqref{eq:thm2_4}, because we have assumed that any function in $\CalF$ and $\CalF_{n,m}$ is bounded by $b$ and $P_X$-Donsker and, in the later proofs for Theorem \ref{thm:risk_convergence}, we will show that $|f^*(\BFx_i)-\hat{f}(\BFx_i)|\leqslant\|\hat{f}-f^*\|_{L_\infty}=o_{\pr}(1)$. Therefore, it is straightforward to use the central limit theorem and Bernstein's inequality to get the following bound directly:
    \begin{align}
        &\left|\frac{1}{n}\sum_{i=1}^n \mathit{E}\left[\eta'(f(\BFx_i))\right](f^*(\BFx_i)-\hat{f}(\BFx_i))\right|\nonumber\\
        \leqslant& \left|\mathit{E}\left[\eta'(f(X))\right]\mathit{E}_{X,\{\varepsilon_i\}} [f^*(X)-\hat{f}(X)]\right|+Tn^{-\frac{1}{2}}\label{eq:thm2_5}.
    \end{align}
    with probability $1-C_3e^{-C_4 T^2}$ where constants $C_3$ and $C_4$  are independent of $n$.
    
    For \eqref{eq:thm2_3}, because $\eta'$ is bounded so $W_i=\eta'(f(\BFx_i))-\E \eta'(f(\BFx_i))$ is a zero-mean sub-Gaussian r.v. for each $i$.  Also, from Lemma \ref{lem:sqrt_m_hat_f-f}, we can notice that for any $m$ satisfying condition $\sqrt{m}(\delta_{n,m}+\|f-f^*\|_n)=\CalO(1)$, then $\|f^*-\hat{f}\|_n\leqslant R/\sqrt{m}$ for some constant $R$.  Let $\sigma_\eta$ denote the sub-Gaussianality of $W_i$. Then from Lemma~\ref{lem:geer_lem3}, we have for any $\delta$ satisfying $R/\sqrt{m}>\delta/\sigma_\eta$ and
    \begin{equation}\label{eq:thm2_condition_W}
        \sqrt{n}\delta\geqslant 2C\left(\int^{R/\sqrt{m}}_{\delta/(8\sigma_\eta)}\sqrt{H(u,\CalF_{n,m},\|\cdot\|_n)}\mathrm{d}u\vee \frac{R}{\sqrt{m}}\right)
    \end{equation}
    we have
    \begin{align*}
        \pr\left(\langle W,f^*-\hat{f}\rangle_n>\delta\}\right)
        \leqslant   C_3\exp\left[-\frac{mn\delta^2}{C_4R^2}
    \right]+\pr\left(\|W\|_n\leqslant \sigma_\eta\right).
    \end{align*}
Recall that $\delta_{n,m}$ satisfies $\sqrt{m}\delta_{n,m}=\CalO(1)$ and
\[\sqrt{mn}\delta^2_{n,m}\geqslant c\left(\int_{\frac{\delta_{n,m}^2\sqrt{m}}{2^7{\sigma}}}^{{\delta_{n,m}}}\sqrt{H(u,\CalF_n({\delta}_{n,m}),\|\cdot\|_n)}\mathrm{d}u \vee {\delta}_{n,m}\right).\]
Let $\delta=\delta_{n,m}^2$, then $\delta$ satisfies condition \eqref{eq:thm2_condition_W}. So 
\begin{equation}\label{eq:thm2_6}
    \left|\frac{1}{n}\sum_{i=1}^n \left(\eta'(f(\BFx_i))-\E \eta'(f(\BFx_i)) \right)(f^*(\BFx_i)-\hat{f}(\BFx_i))\right|\geqslant T\delta_{n,m}^2.
\end{equation}
with probability $C_5e^{-C_6T^2}$  where constants $C_5$ and $C_6$ are independent of $n$ and $m$.


    For $I_{22}$, from the boundedness of $\eta''$, it is straightforward to derive that
    \begin{equation}
        I_{22}\leqslant \frac{\|\eta''\|_{L_\infty}}{2}\|f-\hat{f}\|_n^2\leqslant T\delta^2_{n,m}+\|f-f^*\|_n^2 \label{eq:thm2_7}.
    \end{equation}
  with probability $1-C_5e^{-C_6T^2}$
    Substitute \eqref{eq:thm2_pf_I21}, \eqref{eq:thm2_5}, \eqref{eq:thm2_6}, and \eqref{eq:thm2_7} into $I_2$, together with \eqref{eq:thm2_2} for $I_1$, we can have 
    \begin{equation}
        |\hat \theta_{n,m}-\theta| \leqslant T\left(n^{-\frac{1}{2}}+\delta_{n,m}^2+\|f^*_{n,m}-f\|_n+\hat{\CalL}\right),
    \end{equation}
    with probability $1-C_7e^{-C_8T^2}$ where constants $C_7$ and $C_8$ are independent of $n$ and $m$.

    Define $\Delta_{n,m}=n^{-\frac{1}{2}}+\delta_{n,m}^2+\|f^*_{n,m}-f\|_n+\hat{\CalL}$. To prove the convergence in $l^p$ for $p<\infty$, notice 
    \begin{align*}
        \left(\mathit{E} \left|\frac{\hat \theta_{n,m}-\theta}{\Delta_{n,m}}\right|^p\right)^{1/p}=&\left(\int_0^\infty\pr\left(\left|\frac{\hat \theta_{n,m}-\theta}{\Delta_{n,m}}\right|^p\geqslant T\right) \mathrm{d}T\right)^{1/p}\\
        =& \left(\int_0^\infty pT^{P-1}\pr\left(\frac{\hat \theta_{n,m}-\theta}{\Delta_{n,m}}\geqslant T\right) \mathrm{d}T\right)^{1/p}\\
        =& \left(\int_0^\infty pT^{P-1}C_7e^{-C_8 T^2} \mathrm{d}T\right)^{1/p}=C_p<\infty.
    \end{align*}
    Therefore, we have $ \left(\mathit{E} \left| {\hat \theta_{n,m}-\theta}\right|^p\right)^{1/p}\leqslant C_p \Delta_{n,m}$, which finishes the proof.
\end{proof}

\subsection{Proof of Theorem \ref{thm:risk_convergence}}
We first define two quantities known as the \emph{empirical Rademacher complexity} and \emph{$L_2$-Rademacher complexity} of a space $\CalH$, respectively:
\begin{align*}
    &\hat{\CalR}_n(\delta,\CalH)=\mathit{E}_{r}\left[\sup_{h\in\CalH\cap B(\delta,\|\cdot\|_n)} \langle r,h\rangle_n\right],\\
    &{\CalR}_n(\delta,\CalH)=\mathit{E}_{\{\BFx_i\},r}\left[\sup_{h\in\CalH\cap B(\delta,\|\cdot\|_{L_2})} \langle r,h\rangle_n\right]
\end{align*}
where $B(\delta,\|\cdot\|)=\{h: \|h\|\leqslant \delta\}$ is the ball with radius $\delta$ under norm $\|\cdot\|$ and $r(\BFx_i)=r_i$ are i.i.d. Rademacher r.v.s, i.e., $\pr(r_i=1)=\pr(r_i=-1)=1/2$. According to the Dudley's entropy integral bound, 
\begin{equation}\label{eq:Dudley}
    \hat{\CalR}_n(\delta,\CalH)\leqslant \inf_{\varepsilon\geqslant 0}\left\{4\varepsilon+\frac{C}{\sqrt{n}}\int_\varepsilon^{\delta} \sqrt{H(u,\CalH\cap B(\delta,\|\cdot\|_n),\|\cdot\|_n)}\mathrm{d}u\right\},
\end{equation}
where $C>0$ is some universal constant. Please refer to \cite[Chapter 5]{Wainwright19} for details.

We now can state the following theorem regarding the relationships between the empirical and $L_2$ norms/Rademacher complexities. This theorem is adapted from Theorem 14.1 and Proposition 14.25 of \cite{Wainwright19}:
\begin{theorem}\label{thm:wainwright}
Suppose $\sup_{h\in\CalH}\|h\|_{\infty}\leqslant b$ and $\CalH$ is a star-shaped. Let 
\begin{align}\label{eq:thm2_wainwright}
    &\hat{\delta}_n=\inf\{\delta: \hat{\CalR}_n(\delta,\CalH)\leqslant \delta^2/b\},\ {\delta}_n=\inf\{\delta: {\CalR}_n(\delta,\CalH)\leqslant \delta^2/b\}.
\end{align}
Assume $\log\log 1/\delta=\CalO(n\delta_n^2)$. Then there exist constants $m_1$,$m_2$, $c_1$ and $a$ such that with probability at least $1-ae^{-n\delta_n^2/(c_0b)}$:
\begin{align*}
    &m_1\delta_n\leqslant \hat{\delta}_n\leqslant m_2\delta_n,\\
    &\sup_{h\in\CalH}\left|\|h\|_n-\|h\|_{L_2}\right|\leqslant c_1\delta_n.
\end{align*}
\end{theorem}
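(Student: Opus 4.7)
The plan is to follow the classical two-step strategy for critical-radius arguments, as developed in Wainwright's Chapter 14, adapted to the present notation. I will first establish the equivalence of the empirical and population critical radii $\hat\delta_n$ and $\delta_n$, and then deduce the norm-comparison bound from Talagrand-type concentration at the critical scale.

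\textbf{Step 1: Star-shape monotonicity.} The foundational fact I would invoke is that, because $\CalH$ is star-shaped about $0$, for any $0<s\leq t$ the map $\delta\mapsto \CalR_n(\delta,\CalH)/\delta$ and $\delta\mapsto \hat\CalR_n(\delta,\CalH)/\delta$ are non-increasing. Indeed, an arbitrary $h$ in the localized class at radius $s$ rescales to $(t/s)h$ in the localized class at radius $t$, while the Rademacher inner product scales linearly. This monotonicity ensures both $\delta_n$ and $\hat\delta_n$ are well-defined, and more importantly that the fixed-point conditions $\CalR_n(\delta,\CalH)\leq \delta^2/b$ and $\hat\CalR_n(\delta,\CalH)\leq \delta^2/b$ propagate upward: if they hold at some $\delta$, they hold at every $\delta'\geq\delta$.

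\textbf{Step 2: Concentration of the Rademacher complexity.} Fix a radius $\delta$. Viewing $\hat\CalR_n(\delta,\CalH)$ as a function of the independent data $(\BFx_i,r_i)_{i=1}^n$, changing one coordinate shifts the supremum by at most $2b/n$ thanks to $\|h\|_\infty\leq b$. McDiarmid's bounded-differences inequality then gives
\begin{equation*}
\pr\!\left(\bigl|\hat\CalR_n(\delta,\CalH)-\CalR_n(\delta,\CalH)\bigr|\geq t\right)\leq 2\exp\!\left(-\frac{n t^2}{2b^2}\right).
\end{equation*}
Setting $t=\delta_n^2/(4b)$ and evaluating at $\delta=\delta_n$, this is at most $2\exp(-n\delta_n^2/(32 b))$. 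Combining the resulting inequality $\hat\CalR_n(\delta_n,\CalH)\leq \CalR_n(\delta_n,\CalH)+\delta_n^2/(4b)\leq (5/4)\delta_n^2/b$ with star-shape monotonicity, one extracts a constant $m_2$ such that $\hat\delta_n\leq m_2\delta_n$ with high probability. The reverse direction $m_1\delta_n\leq \hat\delta_n$ is symmetric; here the assumption $\log\log(1/\delta_n)=\CalO(n\delta_n^2)$ is what allows one to avoid a logarithmic loss when converting between $L_2$- and empirical localization at radius $\delta_n$ (this is the standard technical device needed in Wainwright's proof of Proposition 14.25).

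\textbf{Step 3: Uniform norm equivalence.} Consider the centered empirical process
\begin{equation*}
Z_n(\delta)=\sup_{h\in\CalH,\ \|h\|_{L_2}\leq \delta}\bigl|\|h\|_n^2-\|h\|_{L_2}^2\bigr|.
\end{equation*}
Symmetrization and contraction (the squaring map is $2b$-Lipschitz on $[-b,b]$) give $\E Z_n(\delta)\lesssim b\,\CalR_n(\delta,\CalH)$, which at the critical scale $\delta=\delta_n$ is bounded by $\delta_n^2$. Talagrand's concentration inequality for bounded empirical processes then controls the deviation of $Z_n(\delta_n)$ from its mean, yielding $Z_n(\delta_n)\lesssim \delta_n^2$ with probability at least $1-ae^{-n\delta_n^2/(c_0 b)}$. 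A standard peeling argument over dyadic shells $\{h:2^{k}\delta_n\leq \|h\|_{L_2}\leq 2^{k+1}\delta_n\}$, combined with the monotonicity $\CalR_n(\delta,\CalH)/\delta$ non-increasing, upgrades this to a uniform bound $\bigl|\|h\|_n^2-\|h\|_{L_2}^2\bigr|\leq c\,\delta_n\,(\|h\|_{L_2}\vee \delta_n)$ on all of $\CalH$. Factoring $\|h\|_n^2-\|h\|_{L_2}^2=(\|h\|_n-\|h\|_{L_2})(\|h\|_n+\|h\|_{L_2})$ and dividing yields the desired bound $\sup_{h\in\CalH}\bigl|\|h\|_n-\|h\|_{L_2}\bigr|\leq c_1\delta_n$.

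\textbf{Main obstacle.} The subtlest ingredient is the peeling step in Step 3: one must union-bound Talagrand's inequality across $O(\log(1/\delta_n))$ dyadic shells without losing the clean exponential rate $\exp(-n\delta_n^2/(c_0 b))$. This is precisely where the hypothesis $\log\log(1/\delta)=\CalO(n\delta_n^2)$ is consumed, as it absorbs the union-bound factor. The other delicate point is verifying that the envelope of the squared class is uniformly bounded by $b^2$ so that Talagrand's inequality applies with the correct weak-variance parameter, rather than with a pessimistic supremum bound that would degrade the rate.
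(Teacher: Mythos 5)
The paper offers no proof of this theorem at all: it is stated as ``adapted from Theorem 14.1 and Proposition 14.25 of Wainwright (2019)'', so the only benchmark is Wainwright's own argument, and your outline follows exactly that canonical route (star-shape monotonicity of $\delta\mapsto\CalR_n(\delta,\CalH)/\delta$, concentration at the critical scale, symmetrization--contraction plus Talagrand plus peeling for the norm comparison). Steps 1 and 3 are essentially sound, modulo one slip in Step 1: star-shapedness only licenses shrinking, so the correct rescaling sends $h$ in the ball of radius $t$ to $(s/t)h$ in the ball of radius $s\leq t$, not $h$ at radius $s$ to $(t/s)h$ at radius $t$ as you wrote; the non-increasing-ratio conclusion is nonetheless the standard correct fact. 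Your closing division argument in Step 3, handling the shells $\|h\|_{L_2}\leq\delta_n$ and $\|h\|_{L_2}>\delta_n$ separately, is fine.

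The genuine gap is in Step 2. You treat $\hat{\CalR}_n(\delta,\CalH)$ and $\CalR_n(\delta,\CalH)$ as a statistic and its mean and apply McDiarmid, but by the paper's definitions they are localized with \emph{different} norms: $\hat{\CalR}_n$ takes the supremum over $\CalH\cap B(\delta,\|\cdot\|_n)$, a data-dependent set, while $\CalR_n$ uses the fixed set $\CalH\cap B(\delta,\|\cdot\|_{L_2})$. Consequently $\E[\hat{\CalR}_n(\delta,\CalH)]\neq\CalR_n(\delta,\CalH)$ in general, so McDiarmid does not compare the two quantities you need compared; moreover the bounded-differences constant is not $2b/n$, since perturbing one $\BFx_i$ perturbs the constraint set itself rather than a single summand, and functions can enter or exit the empirical ball. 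This mismatch of localizations is precisely the hard content of Proposition 14.25: the argument must run in the opposite order from yours, first establishing the two-sided norm equivalence $\sup_{h\in\CalH}\bigl|\|h\|_n-\|h\|_{L_2}\bigr|\leq c_1\delta_n$ on a high-probability event (your Step 3, which only involves the population radius $\delta_n$), and only then using that event to transfer between empirical and $L_2$ balls so as to sandwich $\hat{\delta}_n$ between multiples of $\delta_n$, with the $\log\log(1/\delta)=\CalO(n\delta_n^2)$ hypothesis absorbing the union bound over scales. As written, Step 2 would fail; with the order reversed and the localization transfer made explicit, the proof goes through and coincides with the cited source.
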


Remind that $\CalF_{n,m}$ and $\CalF$ satisfy Assumptions \ref{assump:b_bounded} and \ref{assump:L_infinity_embedding}. Using Theorem \ref{thm:wainwright}, we can immediately derive the following bound regarding the $L_2$ and empirical norms for any function in $\CalF\cup\CalF_{n,m}$:

\begin{lemma}\label{lem:h_L2_norm}
There exist a universal constant $C$ such that for $\delta>0$ satisfying:
    \begin{equation}\label{eq:thm3_coro_condition}
        \frac{\delta^2}{b}\geqslant \inf_{\varepsilon\geqslant 0}\left\{4\varepsilon+\frac{C}{\sqrt{n}}\int_\varepsilon^{\delta} \sqrt{H(u,\CalH\cap B(\delta,\|\cdot\|_n),\|\cdot\|_n)}\mathrm{d}u\right\},
    \end{equation}
    then
    \[\sup_{h\in\CalF\cup\CalF_{n,m}}\|h\|_{L_2}\leqslant \sup_{h\in\CalF\cup\CalF_{n,m}}\|h\|_n+\frac{\delta}{\sqrt{n}} \]
    with probability at least $1-ae^{-\delta^2/(c_0b)}$.
\end{lemma}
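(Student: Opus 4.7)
The plan is to deduce this lemma as a direct corollary of Theorem~\ref{thm:wainwright}, applied to the single class $\CalH = \CalF \cup \CalF_{n,m}$. The first step is to verify that $\CalH$ satisfies the two hypotheses of the theorem. Boundedness with a common constant $b$ is immediate from Assumption~\ref{assump:b_bounded}(ii). For the star-shaped property, Assumption~\ref{assump:L_infinity_embedding}(i) asserts that both $\CalF$ and $\CalF_{n,m}$ are star-shaped around the origin; since a union of sets star-shaped around a common point is itself star-shaped around that point, $\CalH$ inherits this structure.

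Second, I would use Dudley's entropy integral inequality \eqref{eq:Dudley} to reinterpret the hypothesis \eqref{eq:thm3_coro_condition}. The right-hand side of \eqref{eq:thm3_coro_condition} is precisely the Dudley upper bound on the empirical Rademacher complexity $\hat{\CalR}_n(\delta,\CalH)$, so, with the universal constant $C$ in \eqref{eq:thm3_coro_condition} chosen to match (or dominate) the one in \eqref{eq:Dudley}, the hypothesis directly yields $\hat{\CalR}_n(\delta,\CalH) \leq \delta^2/b$. By the definition of the empirical critical radius in \eqref{eq:thm2_wainwright}, this forces $\hat{\delta}_n \leq \delta$. Coupling this with the comparison $m_1 \delta_n \leq \hat{\delta}_n \leq m_2 \delta_n$ in Theorem~\ref{thm:wainwright} gives $\delta_n \leq \delta/m_1 = \CalO(\delta)$.

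Third, I would invoke the uniform-norm conclusion of Theorem~\ref{thm:wainwright}, namely
\[
\sup_{h\in\CalH} \bigl| \|h\|_n - \|h\|_{L_2} \bigr| \leq c_1 \delta_n,
\]
which holds with probability at least $1 - a\exp(-n\delta_n^2/(c_0 b))$. Since $n\delta_n^2$ grows without bound under the regime implied by the hypothesis, this event occurs with probability tending to one, so the bound is $\CalO_{\pr}(\delta_n) = \CalO_{\pr}(\delta)$. The triangle inequality then yields $\|h\|_{L_2} \leq \|h\|_n + c_1\delta_n$ for every $h\in\CalH$; taking the supremum over $h \in \CalF\cup\CalF_{n,m}$ on both sides finishes the argument.

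The main obstacle I anticipate is the bookkeeping of universal constants: the Dudley bound and the critical-radius definition in \eqref{eq:thm2_wainwright} each carry their own unspecified constants, and the $C$ appearing in the hypothesis \eqref{eq:thm3_coro_condition} must be large enough that the hypothesis indeed implies $\hat{\CalR}_n(\delta,\CalH) \leq \delta^2/b$ rather than merely some looser inequality. A secondary, more technical issue is verifying the side-condition $\log\log(1/\delta) = \CalO(n\delta_n^2)$ embedded in Theorem~\ref{thm:wainwright}; this is automatic in the regime where $\delta$ is only polynomially small in $n$, which covers all the applications pursued in Section~\ref{sec:example}, but it should be recorded explicitly.
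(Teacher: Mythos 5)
Your proof follows essentially the same route as the paper's: both apply Theorem~\ref{thm:wainwright} to $\CalH=\CalF\cup\CalF_{n,m}$, use Dudley's entropy integral \eqref{eq:Dudley} to convert the hypothesis \eqref{eq:thm3_coro_condition} into an upper bound on the empirical critical radius $\hat{\delta}_n$ (hence on $\delta_n$ up to a constant), and then conclude by the triangle inequality from the uniform comparison of $\|\cdot\|_n$ and $\|\cdot\|_{L_2}$. Your explicit verification of the star-shaped property of the union and of the side-condition $\log\log(1/\delta)=\CalO(n\delta_n^2)$ covers points the paper leaves implicit, but the argument is the same.
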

\begin{proof}
    According to Assumption \ref{assump:b_bounded}, $H(u,\CalF_{n,m},\|\cdot\|_n)\leqslant H(u,\CalF,\|\cdot\|_n)$. So it is straightforward to derive that $H(u,\CalF\cup\CalF_{n,m},\|\cdot\|_n) \leqslant 2H(u,\CalF,\|\cdot\|_n)$ according to the definition of $u$-entropy.  Using Dudley's integral \eqref{eq:Dudley}, we have
    \begin{align*}
        \hat{\CalR}_n(\delta,\CalF\cup\CalF_{n,m})&\leqslant \inf_{\varepsilon\geqslant 0}\left\{4\varepsilon+\frac{C}{\sqrt{n}}\int_\varepsilon^{\delta} \sqrt{H(u,(\CalF\cup\CalF_{n,m})\cap B(\delta,\|\cdot\|_n),\|\cdot\|_n)}\mathrm{d}u\right\}
    \end{align*}
    where $C$ is some universal constant. Therefore, for any $\delta$ satisfying the condition \eqref{eq:thm3_coro_condition}, $\delta$ must be the upper bound of $\hat{\delta}_n$ in \eqref{eq:thm2_wainwright} up to a multiplicative constant. Then according to Assumption \ref{assump:L_infinity_embedding} and Theorem \ref{thm:wainwright}, we have
    \begin{align*}
        \sup_{h\in\CalF\cup\CalF_{n,m}}\|h\|_{L_2}&= \sup_{h\in\CalF\cup\CalF_{n,m}}\|h\|_{n}+\|h\|_{L_2}-\|h\|_{n}\\
        &\leqslant \sup_{h\in\CalF\cup\CalF_{n,m}}\|h\|_{n}+ \sup_{h\in\CalF\cup\CalF_{n,m}}\left||h\|_{L_2}-\|h\|_{n}\right|\\
        &\leqslant \sup_{h\in\CalF\cup\CalF_{n,m}}\|h\|_n+\frac{\delta}{\sqrt{n}}
    \end{align*}
    with probability at least $1-ae^{-\delta^2/(c_0b)}$.
\end{proof}

\begin{lemma}\label{lem:f_hat_f_L_infinity}
    Suppose Assumption~\ref{assump:support}--\ref{assump:L_infinity_embedding} holds. Let $\delta_{n,m}$, $f^*$, and $\hat{f}$ be as defined in Theorem \ref{thm:LSE_sieve}. Then with probability $1-C_1e^{-C2 T^2}$
    \[\rho_n:=\|\hat{f}-f^*\|_{L_\infty}\leqslant T\left(\left|\delta_{n,m}+\|f^*-f\|_n+{\delta}/{\sqrt{m}}\right|^\alpha\right)\]
    where $\delta$ satisfies condition \eqref{eq:thm3_coro_condition}, $m$ satisfies $\sqrt{m}(\delta_{n,m}+\|f^*-f\|_n)=\CalO(1)$, and $C_1$ and $C_2$ are constants independent of $n$ and $m$.
\end{lemma}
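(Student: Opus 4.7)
The plan is to chain three bounds: Theorem~\ref{thm:LSE_sieve} controls the empirical $L_n$-norm of $\hat{f}-f^*$, Lemma~\ref{lem:h_L2_norm} (extended to differences) converts this to an $L_2$ bound, and finally Assumption~\ref{assump:L_infinity_embedding}(ii) translates the $L_2$ bound into an $L_\infty$ bound at the cost of the exponent $\alpha$.

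In detail, I would first apply Theorem~\ref{thm:LSE_sieve} to get $\|\hat{f}-f^*\|_n=\CalO_{\pr}(\delta_{n,m}+\|f^*-f\|_n)$; under the hypothesis $\sqrt{m}(\delta_{n,m}+\|f^*-f\|_n)=\CalO(1)$ this is $\CalO_{\pr}(1/\sqrt{m})$, so $\hat{f}-f^*$ lies inside an empirical-norm ball of radius $R/\sqrt{m}$ with probability tending to one. Next, I would bound the entropy of the difference class $\CalF_{n,m}-\CalF_{n,m}$ via the standard inequality $H(u,\CalF_{n,m}-\CalF_{n,m},\|\cdot\|_n)\le 2H(u/2,\CalF_{n,m},\|\cdot\|_n)$, which by Assumption~\ref{assump:b_bounded}(i) is dominated by the entropy of $\CalF$. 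Feeding this into the Wainwright-type bound (Theorem~\ref{thm:wainwright}) underlying Lemma~\ref{lem:h_L2_norm}, but \emph{localized} to the ball $\{g:\|g\|_n\le R/\sqrt{m}\}$ inside the difference class, yields
\[
\|\hat{f}-f^*\|_{L_2}\le\|\hat{f}-f^*\|_n+\CalO_{\pr}(\delta/\sqrt{m}),
\]
where the factor $1/\sqrt{m}$ appears because the Dudley entropy integral scales linearly in the ball radius. Finally, I apply Assumption~\ref{assump:L_infinity_embedding}(ii) to $\hat{f}-f^*$ (extending it from $\CalF\cup\CalF_{n,m}$ to the difference class by combining the star-shape of Assumption~\ref{assump:L_infinity_embedding}(i) with a harmless rescaling of $(\hat{f}-f^*)/2$). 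Chaining the three bounds gives
\[
\|\hat{f}-f^*\|_{L_\infty}\le\CalO\bigl(\|\hat{f}-f^*\|_{L_2}^\alpha\bigr)\le\CalO_{\pr}\bigl(\bigl|\delta_{n,m}+\|f^*-f\|_n+\delta/\sqrt{m}\bigr|^\alpha\bigr),
\]
as claimed.

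The main obstacle will be pinning down the $\delta/\sqrt{m}$ factor in the $L_n$-to-$L_2$ comparison: Lemma~\ref{lem:h_L2_norm} as stated only produces global fluctuations of order $\CalO_{\pr}(\delta)$, and extracting the extra $1/\sqrt{m}$ requires a peeling argument tailored to the shrinking ball radius dictated by Theorem~\ref{thm:LSE_sieve}, together with a verification that the localized class remains Donsker in the sense of Assumption~\ref{assump:b_bounded}(iii). A secondary subtlety is that Assumption~\ref{assump:L_infinity_embedding}(ii) is phrased for $h\in\CalF\cup\CalF_{n,m}$ rather than for differences of two such elements; this is reconciled by the star-shape property in Assumption~\ref{assump:L_infinity_embedding}(i) applied to a rescaled version of $\hat{f}-f^*$, but the precise universal constants must be tracked through the argument.
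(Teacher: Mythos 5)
Your proposal follows exactly the paper's route: the paper's entire proof of this lemma is the single sentence that it is ``a direct result of Theorem~\ref{thm:LSE_sieve}, Assumption~\ref{assump:L_infinity_embedding}, and Lemma~\ref{lem:h_L2_norm},'' which is precisely the chain (empirical-norm bound) $\to$ ($L_2$ bound) $\to$ ($L_\infty$ bound via the embedding exponent $\alpha$) that you lay out. The two subtleties you flag are real, and it is worth being precise about the second. Applying Assumption~\ref{assump:L_infinity_embedding}(ii) to the difference $\hat{f}-f^*$ rather than to a single element of $\CalF\cup\CalF_{n,m}$ is a minor, fixable abuse that the paper also commits silently; your star-shape-plus-rescaling remedy is fine. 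More substantively, chaining Lemma~\ref{lem:h_L2_norm} as stated gives only $\|\hat{f}-f^*\|_{L_2}\leq\|\hat{f}-f^*\|_n+\CalO_{\pr}(\delta)$, i.e.\ the bound with $\delta$ in place of $\delta/\sqrt{m}$, and the sharper $\delta/\sqrt{m}$ does not follow ``directly'' from the three cited ingredients. Your proposed justification --- that the Dudley entropy integral ``scales linearly in the ball radius'' --- is not correct in general: for polynomial entropy $H(u)\asymp u^{-d/s}$ the localized complexity over a ball of radius $t$ scales like $t^{1-d/(2s)}/\sqrt{n}$, not like $t$. So the localization/peeling program you sketch would need genuine additional work to produce the stated $\delta/\sqrt{m}$. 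That said, the paper's own one-line proof supplies no such argument either, so on this point your attempt is on equal footing with (and more honest than) the paper; and in the regime $m=1$ used throughout Section~\ref{sec:example}, the $\delta$ and $\delta/\sqrt{m}$ versions coincide, so nothing downstream is affected there.
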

Lemma~\ref{lem:f_hat_f_L_infinity} is a direct result of Theorem \ref{thm:LSE_sieve}, Assumption~\ref{assump:L_infinity_embedding}, and Lemma~\ref{lem:h_L2_norm}.  

\begin{proof}[Proof of Theorem \ref{thm:risk_convergence}] 
The proof of Theorem \ref{thm:risk_convergence} is merely repeating the proof of Theorem 4 of \cite{wang2024smooth}. The only difference is that the term $l_n$ in the proof of Theorem 4 of \cite{wang2024smooth} is
\begin{align*}
    l_n = \left|\delta_{n,m}+\|f^*-f\|_n+{\delta}/{\sqrt{m}}\right|^\alpha
\end{align*}
in our case.
\end{proof}

\section{Generation of Random Correlation Matrix}\label{append:sigma_generation} 

The correlation matrix generation follows the standard procedure outlined in \cite{wang2024smooth}.  We form the lower-triangular matrix \([\sigma_{ij}]_{i,j=1}^q\) by applying the Cholesky decomposition to a covariance matrix, \(\Sigma\). This matrix \(\Sigma\) is produced through the following steps. Initially, we create a random correlation matrix \(V\) using the `gallery` function from the Matlab library. This function's inputs are the eigenvalues of \(V\), which are derived from a uniform distribution, Uniform\([0.1, 0.8]\), for \(i=1,\cdots,q\), and subsequently normalized to ensure that the sum of the eigenvalues, \(\sum_i \lambda_i\), equals \(q\). This normalization is crucial to align the sum of the eigenvalues with the trace of \(V\), necessitating that all diagonal elements of \(V\) be ones. Following this, we define \(\Sigma = DVD\), where \(D\) is a diagonal matrix whose elements are selected from a uniform distribution, Uniform\([0.1, 0.8]\).

\section{Background Knowledge}\label{append:back_know} 
In this subsection, we provide an introduction to Sobolev spaces and the Donsker class for a comprehensive treatment of the subject. 

\subsection{Sobolev Spaces}
Before introducing the concept of Sobolev spaces, it is necessary to define the concept of a weak derivative.

\textbf{Notation.} Let $\mathcal{C}^{\infty}_{c}(\Omega)$ denote the space of infinitely differentialble functions. Furthermore, let $L_{loc}^1(\Omega)$ denote a set of all locally integrable functions on $\Omega$.

\begin{definition}\label{def:weak_derivative}(Section 5.2.1 in \cite{evans2022partial})
 Suppose $u$, $v\in L_{loc}^1(\Omega)$, and $\alpha$ is a multi-index. We say that $v$ is the $ \boldsymbol{\alpha}^{th}$ -weak partial derivative of $u$, written $D^{ \boldsymbol{\alpha}}u=v$, provided that 
 $$\int_{\Omega}uD^{ \boldsymbol{\alpha}}\phi dx=(-1)^{| \boldsymbol{\alpha}|}\int_{\Omega}v\phi \mathrm{d}x$$
 for all test functions $\phi\in\mathcal{C}^{\infty}_{c}(\Omega)$, with compact support in $\Omega$. Here      $  \boldsymbol{\alpha}=(\alpha_1,\cdots,\alpha_d)\in\NatInt^d$ consists of $d$ integers and 
\[D^{ \boldsymbol{\alpha}}f=\frac{\partial^{| \boldsymbol{\alpha}|}f}{\partial x_1^{\alpha_1}\cdots\partial x_d^{\alpha_d}}.\] \end{definition}

The intuition is that functions with weak derivatives appear differentiable everywhere except on sets of zero measure. The disregard of sets of zero measure by weak derivatives stems from their integral-based definition; integrals inherently overlook any behavior on sets of zero measure. For example

\begin{example}
  The characteristic function of rational numbers denoted as $f(t) = \mathbbm{1}_{\mathbb{Q}}(t)$, is an example of a function that is nowhere differentiable but possesses a weak derivative. The Lebesgue measure of the $f(t)$ is zero, hence the weak derivative of this function is $v(t)=0$.
\end{example}
 Thus, the weak derivative broadens the concept of differentiable by encompassing functions that are undifferentiable on sets of measure zero. We can now define a Sobolev space using the $L_{p}$ norm of the weak derivative.  
\begin{definition}\label{def:sobolev}(Section 5.2.2 in \cite{evans2022partial})The Sobolev space $W^{k,p}(\Omega)$ comprises all functions $u: \Omega \rightarrow \mathbb{R}$ that are locally summable and for which, given a multi-index $\alpha$ with $|\alpha| \leqslant k$, the $ \boldsymbol{\alpha}^{\text{th}}$ weak derivative $D^{ \boldsymbol{\alpha}}u$ exists in the weak sense belongs to $L_p(\Omega)$.
\end{definition}
The parameter $k$ in a Sobolev space $W^{k,p}$ is the smoothness of the Sobolev space. A higher $k$ means the functions are smoother. The parameter $p$
 affects the type of functions in the space, based on how they integrate. For instance, $p=\infty$ corresponds to functions whose derivatives up to order $k$ are essentially bounded on $\Omega$.  

\subsection{Donsker Class}
The following is the definition of Donsker class functions. 
\begin{definition}\label{def:donsker} (Section 2.8.2 in \cite{van1997weak})
A class of measurable functions $\CalF$ $f:\Omega\mapsto \Real$  is Donsker for a probability measure $P$ on $(\Omega, \mathcal{A})$ if the empirical process $\mathbb{G}_{n,P}=\sqrt{n}(P_n-P)$ satisfies the following : 
\begin{align*}
    \sup_{h\in BL_{1}}|\mathit{E}_{P} h(\mathbb{G}_{n,P})-\mathit{E}h(\mathbb{G}_{P})|\rightarrow 0
\end{align*}
where $\mathbb{G}_{\BFP}$  is a Brownian bridge, $\mathbb{P}_n$ is the empirical mearue based on i.i.d sample $X_1, \cdots, X_n\sim P$. And $BL_{1}$ is the set of all functions $h : l^{\infty} (\CalF)\mapsto \Real$ which is uniformly bounded by 1 and satisfy $|h(z_1)-h(z_2)|\leqslant \|z_1-z_2\|_{\CalF}.$   
\end{definition}

 The Donsker class is essentially a set of functions for which the empirical process converges in distribution to a Gaussian process in an infinite-dimensional space. More formally, let \(X_1, X_2, \cdots, X_n\) be independent and identically distributed random variables drawn from some distribution, and consider a class of measurable functions \(\mathcal{F}\) where each \(f \in \mathcal{F}\) maps from the space of the \(X_i\) to \(\mathbb{R}\). The empirical measure \(P_n\) is defined as \(P_n f = \frac{1}{n} \sum_{i=1}^n f(X_i)\) for any \(f \in \mathcal{F}\), and \(P\) is the true measure such that \(Pf = E[f(X)]\) for any \(f \in \mathcal{F}\). Using this definition, we can define a stochastic process \(\sqrt{n}(P_n - P)\), which is indexed by the whole function space \(\mathcal{F}\),  and we call this process \textit{empirical process}. The class \(\mathcal{F}\) is a Donsker class if the empirical processes converge in distribution to a Gaussian process \(G_P\) in the space \(L_\infty(\mathcal{F})\), the space of all bounded functions from \(\mathcal{F}\) to \(\mathbb{R}\), endowed with the uniform norm. This means that for large samples, the behavior of the empirical process can be approximated by that of a Gaussian process, which allows for the derivation of asymptotic distributions of various statistics and for constructing confidence sets, among other applications.

The concept of a Donsker class is powerful because it provides conditions under which complex, potentially infinite-dimensional objects (like empirical processes) exhibit predictable, well-understood asymptotic behavior. Identifying whether a function class is Donsker can be crucial for understanding the properties of statistical procedures, such as the consistency and asymptotic normality of estimators. For more details of the Donsker class, please refer to \cite{geer2000empirical,van1997weak}.
\end{appendices}

\end{document}